\documentclass[12pt,letterpaper,titlepage,reqno]{amsart}
\usepackage{amsmath, amssymb, amsthm, amsfonts,amscd,amsaddr,enumerate,mathtools}

\usepackage{hyperref}

\usepackage{backref}

\usepackage[
    paper=a4paper,
    portrait=true,
    textwidth=425pt,
    textheight=650pt,
         tmargin=3cm,
    marginratio=1:1
            ]{geometry}

\theoremstyle{plain}
\newtheorem{theorem}{Theorem}[section]
\newtheorem{proposition}[theorem]{Proposition}
\newtheorem{cor}[theorem]{Corollary}

\newtheorem{prop}[theorem]{Proposition}
\newtheorem{lemma}[theorem]{Lemma}

\theoremstyle{definition}

\newtheorem{rmk}[theorem]{Remark}
\numberwithin{equation}{section}
\newtheorem*{theoremA*}{Theorem A}
\newtheorem*{theoremB*}{Theorem B}
\newtheorem*{theorem1*}{Theorem A'}
\newtheorem*{theoremC*}{Theorem C}
\newtheorem*{theoremD*}{Theorem D}
\newtheorem*{theoremE*}{Theorem E}
\newtheorem*{theoremF*}{Theorem F}
\newtheorem*{theoremE2*}{Theorem E2}
\newtheorem*{theoremE3*}{Theorem E3}
\newcommand{\bs}{\backslash}

\newcommand{\C}{\mathbb{C}}

\newcommand{\Lc}{\mathcal{L}}
\newcommand{\E}{\mathcal{E}}
\newcommand{\Hc}{\mathcal{H}}
\newcommand{\Hb}{\mathbb{H}}
\newcommand{\Q}{\mathbb{Q}}

\newcommand{\Oc}{\mathcal{O}}

\newcommand{\Pc}{\mathcal{P}}
\newcommand{\R}{\mathbb{R}}
\newcommand{\N}{\mathbb{N}}

\newcommand{\SO}{\operatorname{SO}}

\newcommand{\GL}{\operatorname{GL}}
\newcommand{\SL}{\operatorname{SL}}

\newcommand{\SU}{\operatorname{SU}}

\newcommand{\im}{\operatorname{Im}}
\newcommand{\Sp}{\operatorname{Sp}}

\newcommand{\Ad}{\operatorname{Ad}}

\newcommand{\ad}{\operatorname{ad}}
\newcommand{\diag}{\operatorname{diag}}

\newcommand{\Pol}{\operatorname{Pol}}

\newcommand{\res} {\operatorname{Res}}
\newcommand{\Spec}{\operatorname{spec}}

\newcommand{\re}{\operatorname{Re}}

\newcommand{\Mat}{\operatorname{Mat}}

\def\hat{\widehat}
\def\af{\mathfrak{a}}

\def\gf{\mathfrak{g}}

\def\kf{\mathfrak{k}}
\def\lf{\mathfrak{l}}
\def\mf{\mathfrak{m}}
\def\nf{\mathfrak{n}}

\def\sf{\mathfrak{s}}

\def\gl{\mathfrak{gl}}

\def\uf{\mathfrak{u}}

\def\zf{\mathfrak{z}}
\def\la{\langle}
\def\ra{\rangle}
\def\1{{\bf1}}

\def\U{\mathcal{U}}

\def\B{\mathcal{B}}
\def\Cc{\mathcal{C}}
\def\Tc{\mathcal{T}}

\def\Oc{\mathcal{O}}

\def\oline{\overline}

\def\tilde{\widetilde}

\def\Pol{\operatorname{Pol}}
\def\tilde{\widetilde}

\hyphenation{hy-per-geo-me-tric}
\def\oline{\overline}
\def\la{\langle}
\def\ra{\rangle}
\usepackage[usenames]{color}

\title[Poisson transform]
{Poisson transform and unipotent complex geometry}

\begin{document}

\begin{abstract} Our concern is with Riemannian symmetric spaces $Z=G/K$
of the non-compact type and more precisely with the Poisson transform $\Pc_\lambda$  which maps
generalized functions on the boundary $\partial Z$ to $\lambda$-eigenfunctions on $Z$. Special emphasis is given to a maximal unipotent group $N<G$ which naturally
acts on both $Z$ and $\partial Z$. The $N$-orbits on $Z$ are parametrized
by a torus $A=(\R_{>0})^r<G$ (Iwasawa) and letting the level $a\in A$ tend to $0$ on a ray we retrieve
$N$ via $\lim_{a\to 0} Na$ as an open dense orbit in $\partial Z$ (Bruhat).
For positive parameters $\lambda$ the Poisson transform $\Pc_\lambda$
is defined an{ d} injective for functions $f\in L^2(N)$ and we give a novel characterization of
$\Pc_\lambda(L^2(N))$ in terms of complex analysis. For that we view
eigenfunctions $\phi = \Pc_\lambda(f)$ as families $(\phi_a)_{a\in A}$ of functions
on the $N$-orbits, i.e. $\phi_a(n)= \phi(na)$ for $n\in N$. The general theory then tells us that
there is a tube domain $\Tc=N\exp(i\Lambda)\subset N_\C$ such that each $\phi_a$
extends to a holomorphic function on the scaled tube $\Tc_a=N\exp(i\Ad(a)\Lambda)$.
We define a  class of $N$-invariant weight functions { ${\bf w}_\lambda$ on the tube $\Tc$},
rescale them for every $a\in A$ to a weight ${\bf w}_{\lambda, a}$ on $\Tc_a$, and
show that each $\phi_a$ lies in the $L^2$-weighted Bergman space
$\B(\Tc_a, {\bf w}_{\lambda, a}):=\Oc(\Tc_a)\cap L^2(\Tc_a, {\bf w}_{\lambda, a})$. The main result of the article then describes $\Pc_\lambda(L^2(N))$ as those eigenfunctions $\phi$
for which $\phi_a\in \B(\Tc_a, {\bf w}_{\lambda, a})$ and
$$\|\phi\|:=\sup_{a\in A} a^{\re\lambda -2\rho} \|\phi_a\|_{\B_{a,\lambda}}<\infty$$
holds.
\end{abstract}

\author[Gimperlein]{Heiko Gimperlein}
\address{Engineering Mathematics\\
Leopold-Franzens-Universit\"at Innsbruck\\
6020 Innsbruck, Austria\\
{\tt heiko.gimperlein@uibk.ac.at}}
\author[Kr\"otz]{Bernhard Kr\"otz}
\address{Institut f\"ur Mathematik\\
Universit\"at Paderborn\\Warburger Str. 100,
33098 Paderborn, Germany \\ {\tt bkroetz@gmx.de}}

\author[Roncal]{Luz Roncal}
\address{BCAM - Basque Center for Applied Mathematics\\
 48009 Bilbao, Spain and\\ Ikerbasque
Basque Foundation for Science, 48011 Bilbao, Spain and\\
Universidad del Pa\'is Vasco / Euskal Herriko Unibertsitatea,
48080 Bilbao, Spain\\
{\tt lroncal@bcamath.org}}

\author[Thangavelu]{Sundaram Thangavelu}
\address{Department of Mathematics\\
 Indian Institute of Science\\
560 012 Bangalore, India\\
{\tt veluma@iisc.ac.in}}

\maketitle
\section{Introduction}

This article considers range theorems for the Poisson transform on Riemannian symmetric spaces $Z$ in the context of horospherical complex geometry.


We assume that $Z$ is of non-compact type and let $G$ be the semisimple Lie group of isometries of $Z$. Then $Z$ is homogeneous for $G$ and identified
as $Z=G/K$, where $K\subset G$ is a maximal compact subgroup and stabilizer of a fixed base point $z_0\in Z$. Classical examples are the real hyperbolic spaces which will
receive special explicit attention at the end of the article.
\par The Poisson transform maps sections of line bundles over the compact boundary
$\partial Z$ to eigenfunctions of the commutative algebra of $G$-invariant
differential operators $\mathbb{D}(Z)$ on $Z$. Recall that $\partial Z = G/{ \oline P}$ is a real flag manifold
for ${ \oline P =MA\oline N}$ a minimal parabolic subgroup originating from an Iwasawa decomposition
${ G=KA\oline N}$ of $G$. The line bundles we consider are parametrized by the complex characters $\lambda$ of the abelian group
$A$, and we write $\Pc_\lambda$ for the corresponding Poisson transform. { We let $N $ be the unipotent radical of the parabolic subgroup $P=MAN$ opposed to $\oline P$. }

\par The present paper initiates the study of the Poisson transform in terms of the $N$-geometry of both $Z$ and
$\partial Z$. Identifying the contractible group $N$ with its open dense orbit in $\partial Z$, functions on $N$ correspond to sections of the line bundle via extension by zero.  On the other hand
$N\bs Z\simeq A$. 
Hence, given a function $f\in L^2(N)$ with Poisson transform $\phi { =} \Pc_\lambda(f)$, it is natural to consider the family $\phi_a$, $a\in A \simeq N\bs Z$, of functions restricted to the $N$-orbits $Na\cdot z_0\subset Z$.
A basic observation then is that the functions $\phi_a$ extend holomorphically to $N$-invariant tubular neighborhoods $\Tc_a\subset N_\C$ of $N$. Our main result, Theorem \ref{maintheorem}, identifies for positive parameters $\lambda$  the image $\Pc_\lambda(L^2(N))$ with a class of families $\phi_a$ in
weighted Bergman spaces  $\B(\Tc_a, {\bf w}_{\lambda, a})$ on these tubes $\Tc_a$. 

\par Range theorems for the Poisson transform in terms of the $K$-geometry of both
$\partial Z$ and $Z$ were investigated in \cite{I} for spaces of rank one. Note that $\partial Z\simeq  K/M$ and that every line bundle over $K/M$ is trivial, so that sections can be identified with functions on $K/M$. On the other hand
$K\bs Z\simeq A/W$ with $W$ the little Weyl group, a finite reflection group. Given
a  function $f \in L^2(K/M)$ the image $\phi=\Pc_\lambda(f)$ therefore induces a family of partial functions $\phi_a: K\to \C$ with $\phi_a(k):=\phi(ka\cdot z_0)$ on the $K$-orbits in $Z$ parametrized by
$a\in A$.
As $\phi$ is continuous, we have $\phi_a\in L^2(K)$, and \cite{I} characterizes the image $\Pc_\lambda(L^2(K/M))$
in terms of the growth of $\|\phi_a\|_{L^2(K)}$ and suitable maximal functions. Interesting follow up work includes
\cite{BOS} and \cite{Ka}.

\bigskip To explain our results in more detail, we first describe our perspective on eigenfunctions of the algebra $\mathbb{D}(Z)$. The Iwasawa decomposition $G=KAN$ allows us to identify $Z=G/K$ with the solvable group
$S=NA$. Inside $\mathbb{D}(Z)$ one finds a distinguished element, the Laplace--Beltrami
operator $\Delta_Z$. Upon identifying $Z$ with $S$ we use the symbol
$\Delta_S$ instead of $\Delta_Z$. Now it is a remarkable fact that all $\Delta_S$-eigenfunctions extend to a universal $S$-invariant domain $\Xi_S\subset S_\C$. In fact, $\Xi_S$ is closely related to the crown domain
$\Xi\subset Z_\C=G_\C/K_\C$ of $Z$, and we refer to Section~\ref{section crown} for details. In particular, there exists a maximal domain $0\in \Lambda\subset \nf = \rm{Lie}(N)$ such that
\begin{equation} \label{XiS}\Xi_S \supset S \exp(i\Lambda)\,. \end{equation}
The domain $\Lambda$ has its origin in the unipotent model of the crown domain   \cite[Sect.~8]{KO} and, except in the rank one cases, its geometry is not known. Proposition \ref{prop bounded} implies that $\Lambda$ is bounded for a class of classical groups, including $G=\GL(n,\R)$. It is an interesting open problem whether $\Lambda$ is bounded or convex in general.

\par Now let $\phi: S\to \C$ be an eigenfunction of $\Delta_S$. For each $a\in A$ we define the
partial function
$$\phi_a: N \to \C, \quad n\mapsto \phi(na)\, .$$
Because eigenfunctions extend to $\Xi_S$, we see from \eqref{XiS}
that $\phi_a$ extends to a holomorphic function on the tube domain
\begin{equation}\label{defta}
\Tc_a:= N\exp(i\Lambda_a)\subset N_\C
\end{equation}
with
\begin{equation}\label{deflambdaa}
\Lambda_a= \Ad(a)\Lambda\, .
\end{equation}
The general perspective of this paper is to view
an eigenfunction $\phi$ as a family of holomorphic functions $(\phi_a)_{a\in A}$
with $\phi_a$ belonging to $\Oc(\Tc_a)$,  the space of all holomorphic functions on $\Tc_a$.

\par We now explain the Poisson transform and how eigenfunctions of the algebra $\mathbb{D}(Z)$ can be characterized
by their boundary values on $\partial Z$. Fix a minimal parabolic subgroup
$P=MAN$ with $M=Z_K(A)$. If $\theta: G\to G$ denotes the Cartan involution
with fixed point group $K$, we consider $\oline N=\theta(N)$ and the parabolic subgroup $\oline P= M A \oline N$ opposite to $P$.  Because $N\oline P\subset G$
is open dense by the Bruhat decomposition, it proves convenient to identify $\partial Z$ with $G/\oline P$. In the sequel we view $N\subset \partial Z=G/\oline P$ as an open dense subset.
\par For each $\lambda\in \af_\C^*$ one defines  the Poisson transform (in the $N$-picture) as
$$
\Pc_\lambda: C_c^\infty(N) \to C^\infty(S)\ ,
$$
\begin{equation}
\label{Poisson0} \Pc_\lambda f(s)= \int_N f(x) {\bf a} (s^{-1} x)^{\lambda + \rho} \ dx\
\qquad (s\in S)\ ,
\end{equation}
where ${\bf a}: KA\oline N \to A$ is the middle projection with respect to the opposite Iwasawa decomposition, { $a^\lambda:= e^{\lambda(\log a)}$ for $a\in A$} and $\rho { :=\frac{1}{2}\sum_{\alpha\in \Sigma^+} (\dim \gf^\alpha)\cdot \alpha}\in \af^*$ is the Weyl half sum with respect to $P$.

In this article we restrict to parameters $\lambda$ with  $\re \lambda (\alpha^\vee)>0$ for all
positive co-roots $\alpha^\vee\in \af$, denoted  in the following as $\re\lambda>0$. This condition ensures that the integral defining the Harish-Chandra ${\bf c}$-function
$${\bf c}(\lambda):=\int_N {\bf a}(n)^{\lambda+\rho} \ dn$$
converges absolutely.
\par Recall the Harish-Chandra isomorphism between
$\mathbb{D}(Z)$ and the $W$-invariant polynomials on $\af_\C^*$, where $W$ is
the Weyl group of the pair $(\gf, \af)$. In particular, $\Spec \mathbb{D}(Z)=\af_\C^*/ W$, and for each $[\lambda]=W\cdot \lambda$ we denote by $\E_{[\lambda]}(S)$ the corresponding eigenspace on $S\simeq Z$. The image of the Poisson transform consists of eigenfunctions,  $\operatorname{im} \Pc_\lambda(C_c^\infty(N))\subset
\E_{[\lambda]}(S)$.  Because ${\bf a}(\cdot)^{\lambda+\rho}$ belongs to $L^1(N)$ for $\re\lambda>0$, $\Pc_\lambda$ extends from $C_c^\infty(N)$ to $L^2(N)$.

The goal of this article is to characterize $\Pc_\lambda(L^2(N))$. As a first step towards this goal, for $f\in L^2(N)$ and $\phi=\Pc_\lambda(f)$ { in Lemma \ref{lemmaeasybound} we} note the estimate
 $$\|\phi_a\|_{L^2(N)} \leq a^{\rho -\re \lambda}{\bf c}(\re \lambda) \|f\|_{L^2(N)}$$
for all $a\in A$.

The basic observation in this paper is that the kernel $n\mapsto {\bf a}(n)^{\lambda+\rho}$ underlying
the Poisson transform  \eqref{Poisson0} extends holomorphically to $\Tc^{-1}:=\exp(i\Lambda)N$ and remains
$N$-integrable along every fiber, i.e.~for any fixed $y\in \exp(i\Lambda)$ the kernel $n\mapsto {\bf a}(yn)^{\lambda+\rho}$
is integrable over $N$.
This allows us to formulate a condition for positive left $N$-invariant continuous weight functions ${\bf w}_\lambda$ on the tubes  $\Tc=N\exp(i\Lambda)$, namely (see also \eqref{request w})
\begin{equation} \label{request intro w}\int_{\exp(i\Lambda)} {\bf w}_\lambda(y) \|\delta_{\lambda, y}\|^2_{ L^1(N)} \ dy <\infty\, ,\end{equation}
{ where the function $\delta_{\lambda, y}$ is defined in \eqref{deltadef}.} In the sequel we assume that ${\bf w}_\lambda$ satisfies condition \eqref{request intro w} and define rescaled weight functions
$${\bf w}_{\lambda,a}: \Tc_a\to \R_{>0}, \ \ ny\mapsto {\bf w}_\lambda(\Ad(a^{-1})y)\qquad (y\in\exp(i\Lambda_a))$$
on the scaled tubes $\Tc_a$.
The upshot then is that $\phi_a\in \Oc(\Tc_a)$ lies in the weighted Bergman
space
$$\B(\Tc_a, {\bf w}_{\lambda,a}):=\{ \psi\in \Oc(\Tc_a)\mid \|\psi\|^2_{\B_{a, \lambda}}:=
\int_{\Tc_a} |\psi(z)|^2 {\bf w}_{\lambda,a}(z)  dz <\infty\}$$
where $dz$ is the Haar measure on $N_\C$ restricted to $\Tc_a$.

This motivates us the definition of the following Banach  subspace of $\E_{[\lambda]}({ S})\subset \Oc(\Xi_S)$

$$\B(\Xi_S, \lambda):=\{ \phi \in \E_{[\lambda]}({ S})\mid
\|\phi\|:=\sup_{a\in A} a^{\re\lambda -2\rho} \|\phi_a\|_{\B_{a,\lambda}}<\infty\}\, .$$
It will be consequence of Theorem \ref{maintheorem} below that  $\B(\Xi_S, \lambda)$ as a vector space does not depend on the particular choice of the positive left $N$-invariant weight function ${\bf w}_\lambda$ satisfying \eqref{request intro w}.
The main result of this article 
 now reads:

\begin{theorem}\label{maintheorem}Let $Z=G/K$ be a Riemannian symmetric space and $\lambda\in \af_\C^*$ be a
parameter such that $\re \lambda>0$.  Then
$$\Pc_\lambda: L^2(N) \to \B(\Xi_S, \lambda)$$
is an isomorphism of Banach spaces, i.e. there exist $c,C>0$ depending on ${\bf w}_\lambda$ such that
$$c \|\Pc_\lambda(f)\|\leq \|f\|_{L^2(N)} \leq C \|\Pc_\lambda(f)\|\qquad (f\in L^2(N))\, .$$
\end{theorem}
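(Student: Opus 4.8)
The plan is to prove the two norm estimates and surjectivity separately, all three resting on the rescaled integral representation obtained from \eqref{Poisson0} by the substitutions $x\mapsto nx$, $x\mapsto\Ad(a)x$ and the right $A$-covariance ${\bf a}(\cdot\,a)^{\lambda+\rho}={\bf a}(\cdot)^{\lambda+\rho}a^{\lambda+\rho}$ of the holomorphically continued Poisson kernel on $\Tc^{-1}$: for $\phi=\Pc_\lambda(f)$,
\begin{equation*}
\phi_a\bigl(n\exp(i\Ad(a)Y)\bigr)=a^{\rho-\lambda}\int_N f\bigl(n\,\Ad(a)x\bigr)\,\delta_{\lambda,\exp(-iY)}(x)\,dx\qquad(n\in N,\ Y\in\Lambda).
\end{equation*}
\emph{Boundedness.} To prove $\|\Pc_\lambda(f)\|\le C_1\|f\|_{L^2(N)}$ one changes variables $n\exp(iX)\mapsto n\exp(i\Ad(a)Y)$ in $\|\phi_a\|_{\B_{a,\lambda}}^2$, picking up the Jacobian $a^{2\rho}$ of $\Ad(a)$ on $\nf$, which gives $a^{2(\re\lambda-2\rho)}\|\phi_a\|_{\B_{a,\lambda}}^2=\int_N\int_\Lambda|g_a(n,Y)|^2{\bf w}_\lambda(\exp(iY))\,dY\,dn$ with $g_a(n,Y)$ the integral on the right above. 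Cauchy--Schwarz in $x$ against the measure $|\delta_{\lambda,\exp(-iY)}(x)|\,dx$, then the $n$-integration — under which the right translation by $\Ad(a)x\in N$ leaves $\|f\|_{L^2(N)}$ invariant — and finally the $Y$-integration against ${\bf w}_\lambda(\exp(iY))$ invoke exactly hypothesis \eqref{request intro w}, yielding $\|\phi_a\|_{\B_{a,\lambda}}\le C_1 a^{2\rho-\re\lambda}\|f\|_{L^2(N)}$ uniformly in $a$. Hence $\Pc_\lambda(f)\in\B(\Xi_S,\lambda)$ with $\|\Pc_\lambda(f)\|\le C_1\|f\|_{L^2(N)}$.

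\emph{Lower bound and injectivity.} Letting $a\to 0$ along a ray, $\Ad(a)$ contracts $N$ to the identity, so $g_a(n,Y)\to f(n)\,\widetilde{{\bf c}}_{\exp(-iY)}(\lambda)$ pointwise, where $\widetilde{{\bf c}}_y(\lambda):=\int_N\delta_{\lambda,y}(x)\,dx$ is the holomorphic continuation of ${\bf c}$ along $\exp(i\Lambda)$. The estimates just proved supply an integrable majorant, so dominated convergence gives both $a^{\lambda-\rho}\phi_a\rightharpoonup{\bf c}(\lambda)f$ weakly in $L^2(N)$ and
\begin{equation*}
\lim_{a\to 0}a^{2(\re\lambda-2\rho)}\|\phi_a\|_{\B_{a,\lambda}}^2=\|f\|_{L^2(N)}^2\int_\Lambda\bigl|\widetilde{{\bf c}}_{\exp(-iY)}(\lambda)\bigr|^2{\bf w}_\lambda(\exp(iY))\,dY=:c_2^2\,\|f\|_{L^2(N)}^2,
\end{equation*}
with $c_2>0$ because the integrand is positive and tends to $|{\bf c}(\lambda)|^2{\bf w}_\lambda(1)\neq 0$ as $Y\to 0$. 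Therefore $c_2\|f\|_{L^2(N)}\le\|\Pc_\lambda(f)\|\le C_1\|f\|_{L^2(N)}$; in particular $\Pc_\lambda$ is injective (also visible from the weak limit) with closed image, and the two asserted inequalities hold with $c=C_1^{-1}$, $C=c_2^{-1}$.

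\emph{Surjectivity.} It remains to show $\Pc_\lambda(L^2(N))=\B(\Xi_S,\lambda)$. The first ingredient is a \emph{uniform} trace inequality
\begin{equation*}
\|\psi|_N\|_{L^2(N)}^2\le C_0\,a^{-2\rho}\,\|\psi\|_{\B_{a,\lambda}}^2\qquad\bigl(\psi\in\B(\Tc_a,{\bf w}_{\lambda,a})\bigr),
\end{equation*}
with $C_0$ independent of $a$: the substitution $\psi\mapsto\psi(\Ad(a)\,\cdot\,)$ carries $\B(\Tc_a,{\bf w}_{\lambda,a})$ onto $\B(\Tc,{\bf w}_\lambda)$ with the explicit Jacobians $a^{4\rho}$ on $N_\C$ and $a^{2\rho}$ on $N$, reducing everything to the fixed tube $\Tc$, where $N$ is interior because $0\in\Lambda$; a sub-mean-value estimate for the subharmonic function $|\psi|^2$ over a fixed left-$N$-invariant neighbourhood of $N$ in $\Tc$, followed by Fubini and the boundedness below of the continuous left-$N$-invariant weight ${\bf w}_\lambda$ there, gives the claim. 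Consequently, for $\phi\in\B(\Xi_S,\lambda)$ the family $\{a^{\lambda-\rho}\phi_a\}$ is bounded in $L^2(N)$, so along some ray $a_j\to 0$ one has $a_j^{\lambda-\rho}\phi_{a_j}\rightharpoonup{\bf c}(\lambda)f$ for some $f\in L^2(N)$; put $\eta:=\phi-\Pc_\lambda(f)$. Since $a^{\lambda-\rho}(\Pc_\lambda(f))_a\rightharpoonup{\bf c}(\lambda)f$ as well by the previous step, $\eta$ is a $\mathbb D(Z)$-eigenfunction lying in $\B(\Xi_S,\lambda)$ with $a_j^{\lambda-\rho}\eta_{a_j}\rightharpoonup 0$. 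The final point — the one I expect to be the main obstacle — is to conclude $\eta\equiv 0$, i.e.\ that an eigenfunction in $\B(\Xi_S,\lambda)$ whose rescaled slices $a^{\lambda-\rho}\phi_a$ converge weakly to $0$ along a ray must vanish; equivalently, that the boundary-value map is injective on $\B(\Xi_S,\lambda)$. I would argue through the convergent asymptotic expansion of a $\mathbb D(Z)$-eigenfunction along $a\to 0$ in the $S=NA$ picture: the coefficient of the leading exponential $a^{\rho-\lambda}$ is exactly the weak boundary value, which for $\eta$ vanishes, so the entire "$e$-sector" $\sum_\mu a^{\rho-\lambda+\mu}p_\mu$ of the expansion is zero by the defining recursion; the membership $\eta\in\B(\Xi_S,\lambda)$, which bounds $\|\eta_a\|_{\B_{a,\lambda}}$ over \emph{all} of $A$ — hence in every chamber — then forces the remaining sectors to vanish too, so $\eta=0$. (In rank one this is the classical uniqueness of an $L^2$-Poisson integral given its boundary datum; in general I would deduce it from Harish-Chandra's expansions adapted to $S$ together with the above growth bound.) Granting this, $\phi=\Pc_\lambda(f)$, so $\Pc_\lambda(L^2(N))=\B(\Xi_S,\lambda)$, and with the two preceding steps $\Pc_\lambda$ is the asserted isomorphism of Banach spaces.
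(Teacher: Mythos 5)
Your first two steps are essentially sound and track the paper closely. The boundedness estimate is the paper's own argument (rescale by $\Ad(a)$ with Jacobian $a^{2\rho}$, Cauchy--Schwarz against $|\delta_{\lambda,y}|\,dx$, then invoke \eqref{request intro w}), and your lower bound is in substance the paper's norm limit formula, Theorem \ref{norm limit intro}, which indeed yields $c_2\|f\|_{L^2(N)}\le\|\Pc_\lambda(f)\|$ with the explicit constant $c_2=w(\lambda)|{\bf c}(\lambda)|$ (note $\tilde{\bf c}_y(\lambda)\equiv 1$ by Cauchy's theorem, see \eqref{cy}); the paper instead gets this inequality for free from the open mapping theorem once surjectivity is known. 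One caveat: your ``pointwise convergence plus dominated convergence'' phrasing is not literally correct for $f\in L^2(N)$; the limit $g_a(\cdot,Y)\to f$ must be taken in $L^2(N)$ via the approximate-identity/continuity-of-translation argument (uniformly controlled by $\|\delta_{\lambda,y}\|_1$, with \eqref{request intro w} handling the $Y$-integral), exactly as in the paper's proof of the norm limit formula.

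The genuine gap is where you expect it: the surjectivity step, specifically the claim that $\eta=\phi-\Pc_\lambda(f)\equiv 0$. Your proposed route --- a convergent asymptotic expansion of an arbitrary $\mathbb D(Z)$-eigenfunction along $a\to 0$ in the $S=NA$ picture, with the weak $L^2$ boundary value appearing as the coefficient of the leading exponential and the remaining ``sectors'' killed by the Bergman-norm bound --- is not an available elementary tool. The existence of such expansions with (hyperfunction) boundary values for \emph{all} eigenfunctions, and the injectivity of the leading boundary value map for $\re\lambda>0$, is precisely the content of the established Helgason conjecture (\cite{K6,GKKS}), and your sketch of why the non-leading sectors vanish (``bounds $\|\eta_a\|_{\B_{a,\lambda}}$ over all of $A$ --- hence in every chamber'') is not justified: the Bergman bound is a one-sided growth condition along a single degeneration and does not obviously control the other Weyl-group exponents. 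The paper closes this gap differently and more economically: it first invokes the Helgason conjecture to write $\phi=\Pc_\lambda(f)$ for some hyperfunction $f\in C^{-\omega}_\lambda(N)$ with ${\bf c}(\lambda)^{-1}a^{\lambda-\rho}\phi_a|_N\to f$ as hyperfunctions, and then uses the uniform Bergman/sub-mean-value inequality \eqref{norm 1} (your trace inequality) to produce a weak $L^2$ limit $h$ along a subsequence, concluding $f=h\in L^2(N)$. In other words, the boundary value is identified \emph{before} taking limits, so no uniqueness statement for eigenfunctions with vanishing boundary data is ever needed. As written, your surjectivity argument assumes the hard part; replacing your final paragraph by the appeal to the Helgason conjecture (which the theorem's statement in the paper explicitly relies on) repairs the proof.
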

Let us mention that the surjectivity of $\Pc_\lambda$ relies  on the established Helgason conjecture (see \cite{K6,GKKS}) and the Bergman inequality.

We now recall that $\Pc_\lambda$ is inverted by the boundary value map, that is
$${1\over {\bf c}(\lambda)} \lim_{a\to \infty\atop a\in A^-} a^{\lambda-\rho} \Pc_\lambda f(na) = f(n)\qquad (n\in N)\, ,$$
where the limit is taken along a fixed ray in the interior of the negative Weyl chamber { $A^-$}.

Define the positive  constant
\begin{equation} \label{def w const} w(\lambda):=\left[\int_{\exp(i\Lambda)} {\bf w}_\lambda(y) \ dy\right]^{1\over 2}.
\end{equation}
We observe that this constant is indeed finite, see Subsection \ref{sub:norm}.
{ There} we obtain  a corresponding norm limit formula:
\begin{theorem}\label{norm limit intro}
For any $f\in L^2(N)$, $\phi=\Pc_\lambda(f)$ we have
 \begin{equation} \label{norm limit2} {1\over w(\lambda) |{\bf c}(\lambda)|} a^{\re \lambda - 2\rho} \|\phi_a\|_{\B_{a,\lambda}} \to \|f\|_{L^2(N)} \qquad (f\in L^2(N))\end{equation}
for $a\to \infty$ on a ray in $A^-$.
\end{theorem}

Let us emphasize that the weight functions ${\bf w}_\lambda$  are not unique
and it is natural to ask about the existence of optimal choices, i.e.~choices for which $\Pc_\lambda$ establishes an isometry
between $L^2(N)$ and $\B(\Xi_S, \lambda)$, in other words whether a norm-sup identity holds:

\begin{equation} \label{norm sup} \sup_{a\in A} {1\over w(\lambda) |{\bf c}(\lambda)|} a^{\re \lambda - 2\rho} \|\phi_a\|_{\B_{a,\lambda}} =\|f\|_{L^2(N)} \qquad (f\in L^2(N))\, .\end{equation}
The answer is quite interesting in the classical example of the real hyperbolic space
$$Z=\SO_e(n+1,1)/\SO(n+1)\simeq \R^n \times \R_{>0} = N\times A$$
where the study was initiated in \cite{RT} and is now completed in Section \ref{sect hyp}.  Here $N=\nf=\R^n$ is abelian and we recall the classical formulas for the
Poisson kernel and ${\bf c}$-function
$${\bf a}(x)^{\lambda+\rho} = ( 1 +{ |x|}^2)^{-(\lambda +n/2)}\qquad (x\in N=\R^n)\, ,$$
$${\bf c}(\lambda)=  \pi^{n/2} \frac{\Gamma(2\lambda)}{\Gamma(\lambda+n/2)}\, , $$
{ where we write $|\cdot|$ for the Euclidean norm}. It is now easily seen that $\Lambda=\{ y \in \R^n \mid { |y|}<1\}$ is the open unit ball.
A natural family of weights to consider  are powers of the Poisson kernel parametrized by $\alpha>0$

\begin{equation} \label{special weight 1} {\bf w}_{\lambda}^\alpha(z) =
(2\pi)^{-n/2} \frac{1}{\Gamma(\alpha)} \left(1-{ |y|}^2\right)_+^{\alpha -1} \qquad (z=x+iy\in \Tc = \R^n +i\Lambda)\,  ,\end{equation}
{ where $(\,\cdot\, )_+$ denotes the positive part.} These weights satisfy condition \eqref{request intro w} exactly for
 $ \alpha > \max\{2s-1,0\} $ where $s=\re \lambda$, see Lemma \ref{deltabound}.
 Moreover, in Theorem \ref{thm hyp} we establish the following:
 \begin{enumerate}
 \item \label{one} Condition \eqref{request intro w} is only sufficient and Theorems
 \ref{maintheorem}, \ref{norm limit intro} hold even for $\alpha>\max\{2s-\frac{n+1}{2}, 0\}$.
 \item For $\alpha$ as in \eqref{one} and $\lambda=s>0$ real the norm-sup identity \eqref{norm sup} holds.
 \end{enumerate}
Let us stress that \eqref{norm sup}  is a new feature and isn't recorded (and perhaps not even true) for the range investigations with respect to the $K$-geometry in the rank one case: there{ ,} one  verifies lim-sup identities which are even weaker than the norm-limit
formula in Theorem \ref{norm limit intro}, see
 \cite{I}.

\section{Notation}
\label{sec:notation}

Most of the notation used in this paper is standard for semisimple Lie groups and symmetric spaces and can be found for instance in \cite{H3} { for the semisimple case and, for the general setting, in \cite{W1}.}

Let $G$ be the real points of a connected algebraic reductive group defined over $\R$ and let $\gf$ be its Lie algebra. Subgroups of $G$ are denoted by capitals. The corresponding subalgebras are denoted by the corresponding fraktur letter, i.e.~$\gf$ is the Lie algebra of $G$ etc.

\par  We denote by $\gf_\C=\gf\otimes_\R \C$ the complexification of $\gf$ and by $G_{\C}$ the group of complex points. We fix a Cartan involution $\theta$ and write $K$ for the maximal compact subgroup that is fixed by $\theta$.  We also write $\theta$ for the derived automorphism of $\gf$. We write $K_{\C}$ for the complexification of $K$, i.e.~$K_{\C}$ is the subgroup of $G_{\C}$ consisting of the fixed points for the analytic extension of $\theta$.

The Cartan involution induces the infinitesimal Cartan decomposition
$\gf =\kf \oplus\sf$. Let $\af\subset\sf$ be a maximal abelian
subspace.
The set of restricted roots of $\af$ in $\gf$ we denote by $\Sigma\subset \af^*\bs \{0\}$
and write $W$ for the Weyl group of $\Sigma$. We record the familiar root space decomposition
$$\gf=\af\oplus\mf\oplus \bigoplus_{\alpha\in\Sigma} \gf^\alpha\ ,$$
with $\mf=\zf_\kf(\af)$.

Let $A$ be the connected subgroup of $G$ with Lie algebra $\af$ and let $M=Z_{K}(\af)$.
We fix a choice of positive roots $\Sigma^+$ of $\af$ in $\gf$
and write $\nf=\bigoplus_{\alpha\in\Sigma^+} \gf^\alpha$ with corresponding unipotent subgroup
$N=\exp\nf\subset G$. As customary we set $\oline \nf =\theta(\nf)$ and accordingly
$\oline N = \theta(N)$.
For the Iwasawa decomposition $G=KA\oline N$ of $G$ we define the projections $\mathbf{k}:G\to K$ and $\mathbf{a}:G\to A$ by
$$
g\in \mathbf{k}(g)\mathbf{a}(g)\oline N\qquad(g\in G).
$$

Let $\kappa$ be the Killing form on $\gf$ and let $\tilde\kappa$ be a non-degenerate $\Ad(G)$-invariant symmetric bilinear form on $\gf$ such that its restriction to $[\gf,\gf]$ coincides with the restriction of $\kappa$ and $-\tilde\kappa(\,\cdot\,,\theta\,\cdot\,)$ is positive definite. We write $\|\cdot\|$ for the corresponding norm on $\gf$.

\section{The complex crown of a Riemannian symmetric space}\label{section crown}
The Riemannian symmetric space
$Z=G/K$ can be realized as a totally real subvariety of the Stein symmetric space
$Z_\C= G_\C/K_\C$:
$$ Z=G/K \hookrightarrow Z_\C, \ \ gK\mapsto gK_\C\, .$$
In the following we view $Z\subset Z_\C$ and write $z_0=K\in Z$ for the standard base point.

We define the subgroups $A_\C=\exp(\af_\C)$ and
$N_\C=\exp(\nf_\C)$ of $G_\C$. We denote by $F:=[A_\C]_{2-\rm{tor}}$ the finite group
of $2$-torsion elements and note that $F=A_\C \cap K$. Our concern is also with the solvable group $S=AN$ and its complexification
$S_\C=A_\C N_\C$. Note that $S\simeq Z$ as transitive $S$-manifolds, but
the natural morphism $S_\C\to Z_\C$ is neither onto nor injective. Its image
$S_\C \cdot z_0$ is Zariski open in the affine variety $Z_\C$ and we have
$S_\C/F \simeq S_\C\cdot z_0$.

The maximal $G\times K_\C$-invariant domain in
$G_\C$ containing $e$ and contained in $ N_\C A_\C K_\C$ is given by
\begin{equation} \label{crown1}
\tilde \Xi
= G\exp(i\Omega)K_\C\ ,
\end{equation}
where $\Omega=\{ Y\in \af\mid (\forall \alpha\in\Sigma)
\alpha(Y)<\pi/2\}$.
Note in particular that
\begin{equation} \label{c-intersect} \tilde \Xi=\left[\bigcap_{g\in G} g N_\C A_\C K_\C\right]_0\end{equation}
with $[\ldots ]_0$ denoting the connected component of $[\ldots]$ containing $e$.

Taking right cosets by $K_\C$, we obtain
the $G$-domain
\begin{equation}\label{crown2} \Xi:=\tilde \Xi/K_\C \subset Z_\C=G_\C/K_\C\ ,\end{equation}
commonly referred to as the {\it crown domain}. See
\cite{Gi} for the origin of the notion, \cite[Cor.~3.3]{KS} for the inclusion
$\tilde \Xi\subset  N_\C A_\C K_\C$  and \cite[Th.~4.3]{KO} for the maximality.

We recall that $\Xi$ is a contractible space. To be more precise, let $\hat\Omega=\Ad(K)\Omega$ and note that $\hat\Omega$ is an open convex subset of $\sf$. As a consequence of the Kostant convexity theorem it satisfies $\hat\Omega\cap\af=\Omega$ and $p_{\af}(\hat\Omega)=\Omega$, where $p_{\af}$ is the orthogonal projection $\sf\to\af$. The fiber map
$$
G\times_{K}\hat\Omega\to\Xi; \quad [g,X]\mapsto g\exp(iX)\cdot K_{\C}\ ,
$$
is a diffeomorphism by \cite[Prop.~4, 5 and 7]{AG}.  Since $G/K\simeq\sf$ and $\hat\Omega$ are both contractible, also $\Xi$ is contractible. In particular, $\Xi$ is simply connected.

\par As $\Xi\subset S_\C\cdot z_0$ we also obtain a realization
of $\Xi$ in $S_\C/F$ which, by the contractibility of $\Xi$ lifts to an $S$-equivariant
embedding of $\Xi\hookrightarrow S_\C$. We denote the image by $\Xi_S$.
Let us remark that $\Xi_S$ is not known explicitly in appropriate coordinates
except when $Z$ has real rank one, which was determined in \cite{CK}.

\par We recall ${\bf a}: G \to A$ the middle projection of the Iwasawa decomposition $G=KA\oline{N}$ and note that
${\bf a}$ extends holomorphically to
\begin{equation}\label{tilde Xi}
\tilde \Xi^{-1}
:=\{g^{-1}:g\in\tilde\Xi\}\ .
\end{equation}
Here we use that $\tilde \Xi\subset \oline N_\C A_\C K_\C$ as a consequence of
$\Xi\subset N_\C A_\C K_\C$ and the $G$-invariance of $\Xi$. Moreover, the simply connectedness of $\Xi$ plays a role  to achieve ${\bf a}: \tilde \Xi^{-1}\to A_\C$ uniquely: A priori ${\bf a}$ is only defined as a map to  $A_\C/F$.
We denote the extension of ${\bf a}$ to $\tilde \Xi^{-1}$ by the same symbol.

 Likewise one remarks that $\mathbf{k}: G \to K$ extends holomorphically to $\tilde \Xi^{-1}$ as well.

\subsection{Unipotent model for the crown}
Let us define a domain $\Lambda\subset \nf$ by

$$\Lambda:=\{ Y \in \nf\mid \exp(iY)\cdot z_0\subset \Xi\}_0$$
where the index $\{\cdot\}_0$ refers to the connected component of $\{\cdot\}$
containing $0$. Then we have
$$\Xi=G\exp(i\Lambda)\cdot z_0$$
by \cite[Th. 8.3]{KO}. In general the precise shape of $\Lambda$ is not known
except for a few special cases, in particular if the real rank of $G$ is one
(see \cite[Sect. 8.1 and 8.2]{KO}).

\begin{prop} \label{prop bounded} For $G=\GL(n,\R)$ the domain $\Lambda\subset \nf$ is bounded.
\end{prop}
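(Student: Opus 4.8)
The plan is to reduce the boundedness of $\Lambda\subset\nf$ for $G=\GL(n,\R)$ to an explicit linear-algebra computation using the description of the crown domain. Recall $\Lambda=\{Y\in\nf\mid \exp(iY)\cdot z_0\in\Xi\}_0$ and that by \eqref{c-intersect} the crown $\tilde\Xi$ is the connected component of $\bigcap_{g\in G}gN_\C A_\C K_\C$ containing $e$. Equivalently, $\exp(iY)\cdot z_0\in\Xi$ forces $g\exp(iY)\in N_\C A_\C K_\C$ for all $g\in G$; taking $g\in K$ and using $K$-invariance plus the structure of $\tilde\Xi=G\exp(i\Omega)K_\C$ with $\Omega=\{Y\in\af\mid\alpha(Y)<\pi/2\ \forall\alpha\in\Sigma\}$, a point $\exp(iY)K_\C$ lies in $\Xi$ iff its $A_\C$-part lands, after $K$-action, in $\exp(i\hat\Omega)$. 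For $\GL(n,\R)$ everything can be made completely concrete: $Z_\C=\GL(n,\C)/\OO(n,\C)$ is realized as the space of invertible complex symmetric matrices via $gK_\C\mapsto gg^t$, with $z_0\mapsto I$. Hence $\exp(iY)\cdot z_0$ corresponds to the symmetric matrix $P(Y):=\exp(iY)\exp(iY)^t=\exp(iY)\exp(iY^t)$, where $Y$ is a strictly upper triangular real matrix.

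First I would identify the image of $\Xi$ inside the space $\Sym_\C^\times$ of invertible complex symmetric matrices. Using the Cartan/polar description $\Xi=G\exp(i\hat\Omega)\cdot z_0$ with $\hat\Omega=\Ad(K)\Omega$ an $\OO(n)$-invariant convex set in $\sf=\Sym_n(\R)$, the image consists of matrices $k\exp(iD)k^t$ with $k\in\OO(n)$ and $D\in\Sym_n(\R)$ having all eigenvalues in the open interval $(-\pi/2,\pi/2)$ — because for $\GL(n)$ the roots are $e_i-e_j$ and $\alpha(Y)<\pi/2$ for all $\pm(e_i-e_j)$ means all coordinate differences lie in $(-\pi/2,\pi/2)$; combined with the $GL$-orbit and $\hat\Omega$'s convexity this is exactly the condition that $-\tfrac{\pi}{2}I<\operatorname{Im}\log$ of the symmetric matrix$<\tfrac{\pi}{2}I$ in the appropriate sense. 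Concretely: $P\in\Sym_\C^\times$ lies in the image of $\Xi$ iff $P$ admits a continuous branch of $\tfrac{1}{i}\log P$ whose symmetric part has spectrum in $(-\pi/2,\pi/2)$; equivalently, writing $P=\exp(iW)$ one needs $W$ (complex symmetric) to have $\Re W$ small — but the cleanest criterion to aim for is: $P+\bar P^{-1}$, or rather the Hermitian form attached to $P$, is positive definite. I would verify the precise inequality by testing on $\exp(iD)$: there $P+\overline{P}^{\,-1}=e^{iD}+e^{iD}=2\cos(D)+2i\sin(D)$... so instead the right invariant is that $P\bar P^{-1}$ has spectrum avoiding the negative reals, which unwinds to a positivity of $\operatorname{Re}(e^{-i\pi/2}P\,?)$; the exact formulation is standard (it is the Matsuki–Oshima / Kr\"otz–Stanton description of $\Xi$ for $\GL(n)$) and I would cite \cite{KS} for it.

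Next, having a concrete domain $\mathcal{D}\subset\Sym_\C^\times$ equal to the image of $\Xi$, the task becomes: show that the set of strictly upper triangular real $Y$ with $P(Y)=\exp(iY)\exp(iY^t)\in\mathcal{D}$ is bounded (after passing to the connected component of $0$). The key computational step is to expand $P(Y)=\exp(iY)\exp(iY^t)=I+i(Y+Y^t)-\tfrac12(Y+Y^t)^2-YY^t+\dots$; since $Y$ is nilpotent, $\exp(iY)$ is a polynomial in $Y$ of degree $\le n-1$, so $P(Y)$ is a polynomial in the entries of $Y$ and the defining inequalities for $\mathcal{D}$ become polynomial inequalities. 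I would then show these polynomial inequalities force the entries of $Y$ to stay bounded. The cleanest route: the positivity condition characterizing $\mathcal{D}$, evaluated on $P(Y)$, will read as positive-definiteness of some Hermitian matrix $H(Y)$ whose diagonal entries are (up to lower-order terms) of the form $1-c_{ij}|Y_{ij}|^2-(\text{other squares})$ forcing each $|Y_{ij}|$ bounded; more robustly, positive-definiteness of $H(Y)$ bounds $\|H(Y)\|$ hence bounds each polynomial entry, and then a degree/leading-term analysis (induction on the superdiagonals of $Y$, starting from the first superdiagonal where $P(Y)$ depends on $Y_{i,i+1}$ essentially linearly/quadratically) yields that every $Y_{ij}$ is confined to a bounded set.

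The main obstacle I expect is pinning down the correct explicit description of $\Xi$ as a subset of $\Sym_\C^\times$ and then controlling the \emph{higher} superdiagonals of $Y$: while the first superdiagonal is easy (there $P(Y)\approx I+i(Y+Y^t)-YY^t$ and the $(i,i)$ entry behaves like $1-|Y_{i,i+1}|^2$), the entries $Y_{ij}$ with $j-i$ large enter $P(Y)$ only through high-order nilpotent products, so one must argue that a failure of boundedness in a higher superdiagonal would still violate the positivity — this requires a careful induction peeling off superdiagonals and tracking which monomials dominate, and is the part of the argument that genuinely uses the $\GL(n)$ root structure rather than soft generalities. An alternative that sidesteps some of this: use that $\Lambda$ is contained in the (possibly larger but still) bounded set cut out by a \emph{single} $g=w_0$ (longest Weyl element) condition, i.e.\ $\exp(iY)\in N_\C A_\C K_\C$ together with $w_0\exp(iY)\in N_\C A_\C K_\C$, which for $\GL(n)$ says both $\exp(iY)\exp(iY^t)$ and its ``anti-transpose'' lie in the image of $\Omega$ under $A_\C$; this symmetric pair of conditions is often enough to force boundedness with less bookkeeping, and I would fall back on it if the direct induction proves unwieldy.
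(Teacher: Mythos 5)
There is a genuine gap: your proposal is a plan that correctly locates the difficulty but does not resolve it, and the step on which everything rests is left unsettled. First, the explicit description of $\Xi$ inside the space of invertible complex symmetric matrices is never pinned down; you hedge between several candidate positivity conditions and ultimately defer to a citation, but the candidates you name are not correct as stated. For $G=\GL(n,\R)$ the set $\Omega$ is cut out by the conditions $|\alpha(Y)|<\pi/2$ for the roots $\alpha=e_i-e_j$, i.e.\ by \emph{differences} of eigenvalues, not by the eigenvalues themselves (the center direction is unconstrained), so ``spectrum of $D$ in $(-\pi/2,\pi/2)$'' is wrong; and no single positivity condition on $P=gg^t$ can encode the full intersection $\bigcap_{g\in G} gN_\C A_\C K_\C$. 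Second, and more seriously, the induction over superdiagonals --- which you yourself identify as the part that genuinely uses the root structure --- is exactly where the content of the proof lies, and it is not carried out. Your heuristic that the diagonal entries of $\exp(iY)\exp(iY)^t$ look like $1-\sum|Y_{ij}|^2$ fails beyond the first superdiagonal: the $(i,i)$ entry is $\sum_j\bigl((\exp(iY))_{ij}\bigr)^2$, a sum of squares of \emph{complex} numbers, whose real part contains positive contributions from the real parts of the off-diagonal entries of $\exp(iY)$; so positive definiteness does not directly confine the higher entries $Y_{ij}$, and an unbounded sequence could a priori hide in cancellations among high-order nilpotent products.

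For comparison, the paper avoids needing any description of $\Xi$ by passing to the weaker single condition $\exp(iY)N\subset K_\C A_\C\oline N_\C$ (a tube domain $\exp(i\Lambda')N$ with $\Lambda'\supset\pm\Lambda$), which is close in spirit to your ``fallback.'' It then introduces the explicit left-$K_\C$-, right-$\oline N_\C$-invariant minors $f_k(g)=\det\bigl(\la g v_i,g v_j\ra\bigr)$, which must be zero-free on the tube; restricted to the abelian pieces $\uf_k=\Mat_{k\times(n-k)}(\R)$ these give $f_k(\exp(Z))=\det(\1+Z^tZ)$ and hence the \emph{bounded} convex domains $\Upsilon_k$. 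The remaining work --- the analogue of your superdiagonal induction --- is done via the polar decomposition $\exp(iY)=\exp(i\tilde Y_1)\exp(iY_2)\exp(X)$ of Lemma \ref{lemma bipolar} and a Dynkin-formula computation showing that unboundedness of $Y$ in a well-chosen column propagates to unboundedness of the abelian component $\tilde Y_1$, contradicting $\tilde Y_1\in\Upsilon_k$. If you want to salvage your approach, this is the mechanism you would need to reproduce; the determinantal minors (which are essentially the functions $f_k$ evaluated on $g^tg$) are the correct replacement for the vague Hermitian form $H(Y)$.
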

\begin{rmk}\label{rmk bounded}  A general real reductive group $G$ can be embedded into $\GL(n,\R)$ with compatible Iwasawa
decompositions. Then it happens in a variety of cases that the crown domain $\Xi=\Xi(G)$ for $G$ embeds
into the one of $\GL(n,\R)$. For example this is the case for $G=\SL(n,\R), \Sp(n,\R), \Sp(p,q), \SU(p,q)$,   and we refer to \cite[Prop. 2.6]{KrSt} for a complete list. In all these cases  $\Lambda$ is then bounded as a consequence of Proposition
\ref{prop bounded}. \end{rmk}

\begin{proof}[Proof of Proposition \ref{prop bounded}] Define
$$\Lambda'=\{ Y \in \nf\mid  \exp(iY)N \subset K_\C A_\C \oline N_\C\}_0$$
and note that $\Lambda'=-\Lambda$. Now \eqref{c-intersect}  for $N$ replaced by $\oline N$ implies
$\Lambda\subset \Lambda'$. We will show an even stronger statement by replacing $\Lambda$ by $\Lambda'$; in other words we search for the largest tube domain
$T_{N,\Lambda'}:=\exp(i\Lambda') N$ contained in $K_\C A_\C \oline N_\C$ and show that the tube has bounded base.

As usual we let $K_\C=
\SO(n,\C)$, $ A_\C=\diag(n, \C^*)$ and $\oline N_\C$ be the unipotent
lower
triangular matrices. We recall the construction of the basic $K_\C\times \oline N_\C$-invariant functions
on $G_\C$. With $e_1, \ldots, e_n$ the standard basis of $\C^n$ we let $v_i:= e_{n-i+1}$, $1\leq i\leq n$.
Now for $1\leq k\leq n-1$ we define a holomorphic function on $G_\C = \GL(n,\C)$ by
$$f_k(g) = \det \left(\la g(v_i), g(v_j)\ra_{1\leq i,j\leq n-k}\right) \qquad (g\in G_\C)$$
where $\la z,w\ra = z^t w$ is the standard pairing of $\C^n$. As the standard pairing is $K_\C$-invariant we obtain
that $f_k$ is left $K_\C$-invariant. Furthermore from
$$f_k(g) =\la g(v_1)\wedge\ldots \wedge g(v_{n-k}), g(v_1)\wedge\ldots \wedge g(v_{n-k})\ra_{\bigwedge^{n-k}\C^n}$$
we see that $f_k$ is right-$\oline N_\C$-invariant. In particular we have
$$f_k(\kappa a\oline n)= a_{k+1} \cdot\ldots \cdot a_n \qquad (\kappa \in K_\C , \oline n\in \oline N_\C)$$
for $a=\diag(a_1, \ldots, a_n)\in A_\C$.
Hence $f_k$ is not vanishing on $K_\C A_\C \oline N_\C$ and in particular not on the tube domain
$T_{N,\Lambda'}$ which is contained in $K_\C A_\C \oline N_\C$.
\par The functions $f_k$ are right semi-invariant under the maximal parabolic subgroup
$\oline P_k = L_k \oline U_k$ with $L_k=\GL(k,\R)\times \GL(n-k,\R)$ embedded block-diagonally
and $\oline U_k =\1_n+ \Mat_{(n-k)\times k}(\R)$ with $\Mat_{(n-k)\times k }(\R)$ sitting in the lower left corner.
We obtain with $U_k= \1_n+ \Mat_{k\times (n-k)}(\R)$ an abelian subgroup of $N$ with $\uf_k = \Mat_{k \times (n-k)}(\R)$
and record for $Z=X+iY\in \uf_{k,\C}$ that
$$f_k(\exp(Z))= \det (\1_{n-k} + Z^t Z)\, .$$
From this we see that the largest $U_k$-invariant tube domain in $U_{k,\C}=\Mat_{k\times (n-k)}(\C)$ to which $f_k$ extends to a zero free holomorphic function is given by
$$T_k = \Mat_{k\times(n-k)}(\R) + i \Upsilon_k$$
where
$$\Upsilon_k=\{ Y\in \Mat_{k\times(n-k)}(\R)\mid  \1_{n-k}- Y^tY\ \hbox{is positive definite} \}$$
is bounded and convex.

\par With $\nf_k = \lf_k\cap \nf$ we obtain a subalgebra of $\nf$ such that $\nf = \nf_k \ltimes \uf_k$ is
a semi-direct product with abelian ideal $\uf_k$. Accordingly we have $N\simeq U_k \times N_k$ under the multiplication map and likewise we obtain,  via Lemma \ref{lemma bipolar} below,
for each $k$ a diffeomorphic polar map
$$\Phi_k: \uf_k \times \nf_k \times N \to N_\C, \ \ (Y_1, Y_2, n)\mapsto \exp(iY_1)\exp(iY_2)n\, .$$
Note that
$$\Phi_k^{-1}(T_{N,\Lambda'})=\Lambda_k'\times N$$
with $\Lambda_k'\subset \uf_k\times \nf_k$ a domain containing $0$. Now let $\Lambda_{k,1}'$ be the projection of $\Lambda_k'$ to $\uf_k$ and likewise we define $\Lambda_{k,2}'\subset \nf_k$.  Note that $\Lambda_k'\subset \Lambda_{k,1}'\times \Lambda_{k,2}'$. We now claim that $\Lambda_{k,1}'\subset \Upsilon_k$. In fact
let $Y=Y_1+Y_2 \in \Lambda_k'$. Then $\exp(iY_1)\exp(iY_2)\in T_{N,\Lambda'}$ and thus, as $f_k$ is right
$N_{k,\C}$-invariant,
$$ 0\neq f_k(\exp(iY_1)\exp(iY_2))=f_k(\exp(iY_1))\,.$$
Our claim follows.

\par To complete the proof we argue by contradiction and assume that $\Lambda'$ is unbounded. We will show that
this implies that $\Lambda_{k,1}'$ becomes unbounded, a contradiction to the claim above. Suppose now that
there is an unbounded sequence $(Y^m)_{m\in \N}\subset \Lambda'$. We write elements
$Y\in \nf$ in coordinates $Y=\sum_{1\leq i <j\leq n} Y_{i,j}$. Let now $1\leq k\leq n-1$
be maximal such that all $Y^{m}_{i,j}$ stay bounded for $j\leq k$. Our choice of parabolic subgroup then is
$\oline P_k$.

By assumption we have
that $Y^m_{i, k+1}$
becomes unbounded for some $1\leq i \leq k$. Let $l\geq i$ be maximal with this property.
We write elements $Y\in \nf$ as $Y_1+Y_2$ with $Y_1 \in \uf_k$ and
$Y_2\in \nf_k$.
Now for any $Y=Y_1+Y_2\in \nf$ we find unique $\tilde Y_1, X\in \uf_k$ such that
\begin{equation} \label{triple exp} \exp(iY)=\exp(i(Y_1 +Y_2))= \exp(i\tilde Y_1) \exp(iY_2)\exp(X)\end{equation}
as a consequence of the fact that $\Phi_k$ is diffeomorphic and the identity
$$\exp(iY) U_{k, \C} = \exp(iY_2) U_{k,\C}$$
in the Lie group $N_\C/ U_{k,\C}$.

By Dynkin's formula and the abelianess of $\uf_k$ we infer from \eqref{triple exp}
$$iY= ((i\tilde Y_1*iY_2)*X)=i\tilde Y_1 +iY_2+X+\sum_{j=1}^{n-1} c_j i^{j+1} (\ad Y_2)^j \tilde Y_1
+\sum_{j=1}^{n-1} d_j i^j (\ad Y_2)^j X$$
for certain rational constants $c_j, d_j\in \Q$. In particular, comparing real and imaginary parts on both sides we obtain two equations:
\begin{equation} \label{matrix1} Y_1 =  \tilde Y_1 +\sum_{j=1}^{n_1} c_{2j}(-1)^j
 (\ad Y_2)^{2j} \tilde Y_1 +\sum_{j=0}^{n_2} d_{2j+1} (-1)^{j} (\ad Y_2)^{2j+1} X \end{equation}
\begin{equation} \label{matrix2} X=   \sum_{j=0}^{n_1} c_{2j+1}(-1)^j
 (\ad Y_2)^{2j+1} \tilde Y_1 -\sum_{j=1}^{n_2} d_{2j} (-1)^{j} (\ad Y_2)^{2j} X, \end{equation}
{  where $n_1=\lfloor \frac{n-1}{2}\rfloor$ and $n_2=\lceil \frac{n-1}{2}-1\rceil$. }

Our claim now is that $(\tilde Y_1^m)_{l, k+1}$ is unbounded.  If $l=k$, then we deduce from
\eqref{matrix1} that $(Y_1^m)_{k, k+1}= (\tilde Y_1^m)_{k, k+1}$ is unbounded, i.e., our desired contradiction. Now suppose  $l<k$.
We are interested in the entries of $\tilde Y_1$ in the first column
and for that we let $\pi_1: \uf_{k,\C}=\Mat_{k\times (n-k)} (\C) \to \C^k$ { be} the projection { onto} the first column. We decompose $\lf_k=\lf_{k,1} +\lf_{k,2}$ with
$\lf_{k,1}= \gl(k, \R)$ and $\lf_2=\gl(n-k,\R)$.
Write $\uf_{k,j}=\R^k$ for the subalgebra of $\uf_k$ consisting of the $j$-th column and observe
\begin{align} \label{pi1}
\pi_1([\lf_{k,2}\cap \nf_k, \uf_k])&=\{0\}\\
\label{lfk1} [\lf_{k,1}, \uf_{k,j}]&\subset \uf_{k,j}.
\end{align}

Now write $Y_2 = Y_{2|1} + Y_{2|2}$ according to $\lf_{k}=\lf_{k,1}+\lf_{k,2}$.
From \eqref{matrix1}--\eqref{matrix2} together with \eqref{pi1}--\eqref{lfk1}  we then derive that
\begin{align} \label{matrix3}\pi_1(Y_1) &=  \pi_1(\tilde Y_1) +\sum_{j=1}^{n_1} c_{2j}(-1)^j
 (\ad Y_{2|1})^{ 2j} \pi_1(\tilde Y_1)\\
 \notag & \quad +\sum_{j=0}^{n_2} d_{2j+1} (-1)^{j} (\ad Y_{2|1})^{2j+1} \pi_1(X) \end{align}
and
\begin{equation} \label{matrix4} \pi_1(X)=   \sum_{j=0}^{n_1} c_{2j+1}(-1)^j
 (\ad Y_{2|1})^{ 2j} \pi_1(\tilde Y_1) -\sum_{j=1}^{n_2} d_{2j} (-1)^{j} (\ad Y_{2|1})^{2j} \pi_1(X) \, .\end{equation}

We apply this now to $Y=Y^m$ and note that $Y_{2|1}^m$ is bounded by the construction of $\oline P_k$.  From \eqref{matrix3} and \eqref{matrix4} we obtain that
$X^m_{k+1, k}=0$ and $(\tilde Y_1^m )_{k, k+1}= (Y_1^m)_{k, k+1}$ and recursively
we obtain that $X_{i, k+1}^m$ and $\tilde Y_{i, k+1}^m$ remain bounded for $i<l$. It then follows from \eqref{matrix3}, as $Y^m_{l, k+1}$ is unbounded, that $\tilde Y^m_{l, k+1}$ is unbounded. This is the desired contradiction and completes the proof of the proposition.
\end{proof}

\begin{lemma} \label{lemma bipolar}Let $\nf$ be a nilpotent Lie algebra, $N_\C$ a simply connected Lie group
with Lie algebra $\nf_\C$ and $N=\exp(\nf)\subset N_\C$. Let further $\nf_1, \nf_2\subset \nf$ { be} subalgebras with $\nf=\nf_1 +\nf_2$ (not necessarily direct). Suppose that
$\nf_1$ is abelian. Then the 2-polar map
$$\Phi:  \nf_1 \times\nf_2 \times N \to N_\C, \ \ (Y_1, Y_2, n) \mapsto \exp(iY_1) \exp(iY_2) n $$
is onto. If moreover, the sum $\nf_1+\nf_2$ is direct and $\nf_1$  is an ideal, then
$\Phi$ is diffeomorphic.
\end{lemma}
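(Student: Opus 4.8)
The plan is to prove surjectivity first by an induction on $\dim\nf$ using the lower central series, then to upgrade to a diffeomorphism under the extra hypotheses by a dimension count plus an inverse-function-theorem argument. For surjectivity, I would argue by induction on the nilpotency class (or on $\dim\nf$). Let $\zf$ be a nonzero central ideal of $\nf$ contained in $\nf_1$ or, if that fails, reduce modulo a central subspace $\zf_0\subset\zf\cap(\nf_1+[\nf,\nf])$ chosen so that the image of $\Phi$ in $N_\C/\exp(\zf_{0,\C})$ is handled by the inductive hypothesis for the quotient algebra $\bar\nf=\nf/\zf_0$ with the subalgebras $\bar\nf_1,\bar\nf_2$. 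Once any element $g\in N_\C$ is matched modulo $\exp(\zf_{0,\C})$ by some $\exp(iY_1)\exp(iY_2)n$, the correction lives in $\exp(\zf_{0,\C})$; writing $\zf_0\subset\nf_1$ (which can be arranged since $\nf_1$ is a subalgebra and any central line can be shifted into it after the reduction), the leftover central factor $\exp(i\zeta)\exp(\xi)$ with $\zeta,\xi\in\zf_0$ real is absorbed into $\exp(iY_1)$ and into $n$ respectively, because $\zf_0$ is central and the Haar/exponential coordinates on $N_\C$ split off a central direct factor. The base case $\dim\nf\le 1$ is immediate.

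For the diffeomorphism statement assume $\nf=\nf_1\oplus\nf_2$ with $\nf_1$ an abelian ideal. Then $N_1=\exp(\nf_1)$ is a closed normal subgroup of $N$, $N\simeq N_1\rtimes N_2$, and correspondingly $N_\C\simeq N_{1,\C}\rtimes N_{2,\C}$ with $N_{1,\C}=\exp(\nf_{1,\C})\simeq\nf_{1,\C}$ via $\exp$. The map $\Phi$ then reads $(Y_1,Y_2,n)\mapsto \exp(iY_1)\,\exp(iY_2)n$; pushing to the quotient $N_\C\to N_{2,\C}$ shows the $N_{2,\C}$-component of $\Phi(Y_1,Y_2,n)$ is $\exp(iY_2)\,\mathrm{pr}_{2}(n)$, so the polar map on $N_{2,\C}$, namely $(Y_2,n_2)\mapsto\exp(iY_2)n_2$, must be a diffeomorphism $\nf_2\times N_2\to N_{2,\C}$ — this is the classical fact that $N_{2,\C}$ is a complexification of $N_2$, i.e. $\exp(i\nf_2)N_2=N_{2,\C}$ diffeomorphically (for a simply connected nilpotent group the map $(X,Y)\mapsto\exp(iX)\exp(Y)$ is a real-analytic diffeomorphism $\nf_2\oplus\nf_2\to N_{2,\C}$). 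Given that, for fixed $Y_2$ and fixed $N_2$-part of $n$, the remaining data $(Y_1,n_1)$ parametrize the abelian normal factor: since $\nf_1$ is an ideal, conjugation by $\exp(iY_2)$ preserves $\nf_{1,\C}$, so $\exp(iY_1)\exp(iY_2)n = \exp(iY_2)\exp(i\,\Ad(\exp(iY_2))^{-1}Y_1)\,n$, and inside the abelian group $N_{1,\C}\simeq\nf_{1,\C}=\nf_1\oplus i\nf_1$ the assignment $(Y_1,n_1)\mapsto i\,\Ad(\exp(-iY_2))Y_1 + n_1$ is, for each fixed $Y_2$, an $\R$-linear isomorphism $\nf_1\oplus\nf_1\to\nf_{1,\C}$ depending real-analytically on $Y_2$ (its "matrix" is $\begin{psmallmatrix} 0 & 1\\ \Ad(\exp(-iY_2))|_{\nf_1} & 0\end{psmallmatrix}$ in the real/imaginary splitting, manifestly invertible). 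Assembling these fibrewise isomorphisms over the diffeomorphism on the $N_{2,\C}$-level shows $\Phi$ is a bijective real-analytic map whose differential is everywhere invertible, hence a diffeomorphism.

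The main obstacle is the bookkeeping in the surjectivity step: when $\nf_1+\nf_2$ is not direct one cannot simply split off $N_1$, and one must choose the central ideal $\zf_0$ so that both the quotient hypotheses persist (the images $\bar\nf_1,\bar\nf_2$ are still subalgebras summing to $\bar\nf$, with $\bar\nf_1$ abelian) \emph{and} the residual central factor is genuinely absorbable into the $\exp(iY_1)$ and $n$ slots; this requires arranging $\zf_0\subseteq\nf_1$, which is where the abelianness of $\nf_1$ and a careful choice of central subspace (e.g. intersecting the last nonzero term of the lower central series with $\nf_1$, after first disposing of any central part of $\nf_2$ not in $\nf_1$) are used. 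Everything in the diffeomorphism half is then routine linear algebra once the semidirect structure $N_{1,\C}\rtimes N_{2,\C}$ is in place.
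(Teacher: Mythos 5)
Your diffeomorphism half is sound and in fact somewhat more detailed than the paper's: the paper only verifies injectivity by reducing modulo the normal subgroup $N_{1,\C}$ and using $N\simeq N_1\times N_2$, leaving the invertibility of the differential implicit, whereas you also exhibit the fibrewise linear isomorphisms. (One small caveat there: $\Ad(\exp(-iY_2))Y_1$ has both real and imaginary parts, so the invertibility of your "matrix" really rests on $\cos(\ad Y_2)|_{\nf_1}=1+\text{nilpotent}$ being unipotent, not on the block form you wrote.) The genuine gap is in the surjectivity step, exactly at the point you yourself flag as "the main obstacle." Your plan requires arranging a nonzero central subspace $\zf_0\subseteq\nf_1$ so that the residual central correction can be absorbed into the $\exp(iY_1)$ and $n$ slots. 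This cannot be arranged in general: already for the Heisenberg algebra with $[X,Y]=Z$, $\nf_1=\R X$, $\nf_2=\R Y+\R Z$, the center $\R Z$ meets $\nf_1$ trivially, and "shifting a central line into $\nf_1$" is not an available operation; in general the center of $\nf$ is contained in neither $\nf_1$ nor $\nf_2$. So the inductive step as you describe it does not close.

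The paper's fix is a different absorption argument that your proposal is missing. Quotient by the \emph{full} center $Z(N_\C)$, apply induction to write $n_\C=\exp(iY_1)\exp(iY_2)\,n\,z_\C$ with $z_\C\in Z(N_\C)$, and split $z_\C=z\exp(iY)$ with $z\in Z(N)$ and $Y\in\zf(\nf)$. Decompose $Y=Y_1'+Y_2'$ along $\nf=\nf_1+\nf_2$ (always possible; no containment of the center in either summand is needed). The key observation is that $[Y_1',Y_2']=0$ \emph{because their sum is central}, so $\exp(iY)=\exp(iY_1')\exp(iY_2')$. Then $\exp(iY_1')$ merges with $\exp(iY_1)$ by the abelianness of $\nf_1$, while $\exp(iY_2')\exp(iY_2)\in N_{2,\C}$ is rewritten as $\exp(iY_2'')n_2$ via the standard polar decomposition $N_{2,\C}=\exp(i\nf_2)N_2$ of the simply connected nilpotent group $N_{2,\C}$, and $n_2$ is pushed into the $N$-slot. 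This commuting decomposition of the central correction is the step your argument needs and does not supply.
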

\begin{proof} We prove  the statement by induction on $\dim N$. Let $Z(N_\C)\subset N_\C$ the center of $N_\C$. Note that $Z(N_\C)$ is connected and of positive dimension if $\dim \nf>0$. Set $\tilde \nf:=\nf/\zf(\nf)$, $\tilde \nf_i:= (\nf_i +\zf(\nf))/\zf(\nf)$ and $\tilde N_\C =
N_\C/ Z(N_\C)$. Induction applies and we deduce that for every $n_\C \in N_\C$
we find elements $n\in N$, $Y_i\in \nf_i$ and $z_\C\in Z(N_\C)$ such that
$$ n_\C = \exp(iY_1) \exp(iY_2) n z_\C.
$$
We write $z_\C = z y $ with $z\in Z(N)$ and $y=\exp(iY)$ with $Y\in \zf(\nf)$. Write
$Y=Y_1' +Y_2'$ with $Y_i'\in \nf_i$. As $Y$ is central $Y_1'$ commutes with
$Y_2'$ and so $y =\exp(Y_1')\exp(Y_2')$. Putting matters together we arrive at
$$ n_\C = \exp(iY_1)\exp(iY_1') \exp(iY_2') \exp(iY_2) nz.
$$
Now $nz\in N$ and $\exp(iY_1)\exp(iY_1')=\exp(i(Y_1+Y_1'))$. Finally,
$\exp(iY_2')\exp(iY_2)= \exp(iY_2'')n_2$ for some $Y_2''\in \nf_2$ and
$n_2\in N_2 =\exp(\nf_2)$. This proves that $\Phi$ is surjective.
\par For the second part let us assume the further requirements. We confine ourselves with showing that $\Phi$ is injective.
So suppose that
$$\exp(iY_1)\exp(iY_2) n= \exp(iY_1') \exp(iY_2') n'$$
and reduce both sides mod the normal subgroup $N_{1,\C}$. Hence $Y_2=Y_2'$.
Since we have $N\simeq N_1\times N_2$ under multiplication we may assume, by the same argument that $n=n_1\in N_1$ and $n'=n_1'$. Now  injectivity  is immediate.
\end{proof}

\section{The Poisson transform and the Helgason conjecture}
\subsection{Representations of the spherical principal series}
Let $\oline P = M A\oline N$ and define for
$\lambda\in \af_\C^*$ the normalized character
$$\chi_\lambda: \oline P \to \C^*,\quad \oline p = ma\oline n \mapsto a^{\lambda-\rho}\,. $$
Associated to this character is the  line bundle $\Lc_\lambda= G\times_{\oline P} \C_\lambda\to G/\oline P$.
The sections of this line bundle form the representations of the spherical principal
series: We denote the $K$-finite sections by $V_\lambda$, the analytic sections
by $V_\lambda^\omega$ and  the smooth sections by $V_\lambda^\infty$.
Note in particular,
$$V_\lambda^\infty=\{ f\in C^\infty(G)\mid f(g\oline p ) =
\chi_\lambda(\oline p)^{-1}  f(g), \ \oline p\in \oline P, g \in G  \}$$
and that $V_\lambda^\infty$ is a $G$-module under the left regular representation.
Now given $f_1\in V_\lambda^\infty$ and $f_2\in V_{-\lambda}^\infty$ we obtain that
$f:=f_1f_2$ is a smooth section of $\Lc_{-\rho}$ which identifies with the 1-density bundle of the compact flag variety $G/\oline P$. Hence we obtain a natural $G$-invariant non-degenerate pairing

\begin{equation} \label{dual}V_\lambda^{\infty}\times V_{-\lambda}^\infty\to \C, \quad (f_1, f_2)\mapsto \la f_1, f_2\ra:=\int_{G/\oline P} f_1f_2\, .\end{equation}
In particular, the Harish-Chandra module dual to $V_\lambda$ is isomorphic to $V_{-\lambda}$.
The advantage using the pairing \eqref{dual}  is that it easily gives formulas when trivializing
$\Lc_\lambda$, and one securely obtains correct formulas for the compact and non-compact
picture. Using this pairing we define { the space of} distribution vectors as the strong dual
$V_\lambda^{-\infty}=(V_{-\lambda}^\infty)'$. Likewise we obtain { the space of} hyperfunction vectors
$V_\lambda^{-\omega}$. Altogether we have the natural chain
$$ V_\lambda\subset V_\lambda^\omega\subset V_\lambda^\infty\subset
V_\lambda^{-\infty} \subset V_{\lambda}^{-\omega}\, .$$
We denote by $f_{\lambda, K}\in V_\lambda$ the $K$-fixed vector with
$f_{\lambda, K}(\1)=1$ and normalize the identification of $\Lc_{-\rho}$ with the 1-density
bundle such that $\int f_{-\rho, K}=1$.

\subsection{Definition of the Poisson transform and Helgason's conjecture}
  We move on with the concept of Poisson transform and the Helgason conjecture on
$Z=G/K$ which was formulated in \cite{H1} and first established in \cite{K6}; see also \cite{GKKS} for a novel elementary  treatment.  We denote by ${\mathbb D}(Z)$ the commutative algebra of $G$-invariant differential
operators and recall that the Harish-Chandra homomorphism for $Z$ asserts that  ${\mathbb D}(Z)\simeq \Pol(\af_\C^*)^W$ with $W$ the Weyl group. In particular,
$\Spec{\mathbb D}(Z)\simeq \af_\C^*/W$.

For $\lambda\in \af_\C^*$
we denote by $\E_{[\lambda]}(Z)$ the ${\mathbb D}(Z)$-eigenspace attached to
$[\lambda]=W\cdot \lambda\in \af_\C^*/W$. Note that all functions in $\E_{[\lambda]}(Z)$ are eigenfunctions of $\Delta_Z$ to the eigenvalue $\lambda^2 - \rho^2$,
with $\lambda^2$ abbreviating the Cartan--Killing pairing $\kappa(\lambda, \lambda)$.
In case $Z$ has real rank one, let us remark that this characterizes
$\E_{[\lambda]}(Z)$, i.e.

$$ \E_{[\lambda]}(Z)=\{ f \in C^\infty(Z)\mid \Delta_Z f = (\lambda^2 -\rho^2)f\}\, . $$
For $\lambda\in \af_\C^*$ one defines the $G$-equivariant Poisson transform

$$\Pc_\lambda: V_\lambda^{-\omega}\to C^\infty(G/K),
\ \ f\mapsto (gK\mapsto  \la f, g\cdot f_{-\lambda, K}\ra).
$$
The Helgason conjecture then asserts that $\Pc_\lambda$ is onto the
$\mathbb{D}(Z)$-eigenspace  $\E_{[\lambda]}(Z)$
provided that $f_{K,-\lambda}$ is cyclic in $V_{-\lambda}$, i.e.
$\U(\gf)f_{K,-\lambda}= V_{-\lambda}$.   The latter condition is always satisfied
if Kostant's condition \cite[Th.~8]{Kos} holds: $\re \lambda(\alpha^\vee)\geq 0$ for all positive roots $\alpha$.
In the sequel we abbreviate this condition as $\re \lambda \geq 0$.

If $\re \lambda >0$, then the Poisson transform is inverted
by the boundary value map
$$b_\lambda: \E_\lambda(Z) \to V_\lambda^{-\omega}, \ \
\phi\mapsto (g\mapsto {\bf c}(\lambda)^{-1}\lim_{a\to \infty\atop a\in A^-}  a^{\lambda -\rho} \phi(ga))$$
where ${\bf c}(\lambda)$ is the Harish-Chandra ${\bf c}$-function:

$${\bf c}(\lambda):=\int_N{\bf a}(n)^{\lambda +\rho} \ dn $$
with ${\bf a}: KA\oline N \to A$ the middle projection.

 In particular, we have \begin{equation} \label{boundary} b_\lambda(\Pc_\lambda(f)) = f \qquad (f \in V_\lambda^{-\omega}, \re \lambda >0)\, .\end{equation}

\section{The Poisson transform in terms of $S$-geometry}
{ As emphasized in the introduction our focus in this article is on the $S=AN$-picture of $Z=G/K$ which we henceforth identify with $S$. In particular, we will write
$\E_{[\lambda]}(S)$ instead of $\E_{[\lambda]}(Z)$ etc.}

\par
We fix a parameter
$\lambda$ such that $\re \lambda >0$. The goal is to identify
subspaces of $V_\lambda^{-\omega}$ for which $\Pc_\lambda$ has a particularly
nice image in terms of $S$-models. From what we already explained we
have
$$ \operatorname{im} \Pc_\lambda\subset \Oc(\Xi_S)$$
and, in particular, for  all $\phi  \in \operatorname{im} \Pc_\lambda$ and $a\in A$
we have $\phi_a\in  \Oc(\Tc_a)$.
The general problem here is that one wants to identify
$V_\lambda^{-\omega}$ with a certain subspace of $C^{-\omega}(N)$ which is tricky and
depends on the parameter $\lambda$.  The compact models for the spherical principal series here are much cleaner to handle as the restriction maps
$$\res_{K,\lambda} : V_\lambda^\infty \to C^\infty(K/M)=C^\infty(K)^M, \quad f\mapsto f|_K$$
are isomorphisms. In this sense we obtain a natural identification
$V_\lambda^{-\omega} \simeq C^{-\omega}(K/M)$ as $K$-modules which is
parameter independent.
Contrary to that, the faithful restriction map
$$\res_{N,\lambda} : V_\lambda^\infty \to C^\infty(N), \quad f\mapsto f|_N$$
is not onto and the image depends on $\lambda$.
For a function $h\in C^\infty(N)$ we define a function { $H_\lambda$} on
the open Bruhat cell $NMA\oline N$ by
$$H_\lambda(n ma\oline n) =  h(n) a^{-\lambda+\rho}\, .$$
Then the image of $\res_{N,\lambda}$ is by definition given by
$$ C_\lambda^\infty(N)=\{ h \in C^\infty(N)\mid H_\lambda\ \hbox{extends to a smooth function on $G$}\}\, .$$
In this sense $V_\lambda^{-\omega}$ corresponds in the non-compact model to
$$C_\lambda^{-\omega}(N)= \{ h \in C^{-\omega}(N)\mid H_\lambda|_{K\cap N \oline P}\ \hbox{extends to a hyperfunction on $K$}\}\, .$$

Having said this we take an element $f\in C_\lambda^{-\omega}(N)$ and observe that
the Poisson transform in terms of $S$ is  given by
\begin{equation} \label{Poisson} \Pc_\lambda f(s)= \int_N f(x) {\bf a} (s^{-1} x)^{\lambda + \rho} \ dx\
\qquad (s\in S)\end{equation}
with ${\bf a}: KA\oline N \to A$ the middle projection.  In accordance with
\eqref{boundary} we then have
$${1\over {\bf c}(\lambda)} \lim_{a\to \infty\atop a\in A^-} a^{\lambda-\rho} \Pc_\lambda f(na) = f(n)\qquad (n\in N)\,.$$

Let us note that
the Hilbert model of $\Hc_\lambda=L^2(K/M)\subset C^{-\omega}(K/M)=V_\lambda^{-\omega}$ of $V_\lambda$ corresponds in the non-compact picture to $L^2(N, {\bf a}(n)^{2 \re \lambda} dn)\supset L^2(N)$ and hence
$$L^2(N)\subset C^{-\omega}_\lambda(N)\qquad (\re \lambda\geq 0)\, .$$

\par 
The main objective now is to give a novel characterization of $\Pc_\lambda(L^2(N))$ for  $\re \lambda>0$.
{  For a function  $\phi$ on $S=NA$ and $a\in A$ we recall the partial functions on $N$ defined by
$$\phi_a(n)= \phi(na)\qquad (n\in N)\, .$$}
Now, given $f\in L^2(N)$ we let
$\phi:=\Pc_\lambda(f)$ and rewrite \eqref{Poisson} as
\begin{equation} \label{P rewrite} {1\over {\bf c}(\lambda)}  a^{\lambda-\rho}  \phi_a(n) = \int_N f(x)\delta_{\lambda, a}(n^{-1}x) \ dx \end{equation}
with
\begin{equation} \label{deltadefa} \delta_{\lambda, a}(x):= {1\over {\bf c}(\lambda)} a^{-2\rho} {\bf a} (a^{-1} x a)^{\lambda+\rho}
\qquad (x \in N)\, .\end{equation}
We first note that the condition $\re \lambda>0$ then implies that $\delta_{\lambda, a}$ is a Dirac-sequence
on $N$ for $a\to \infty$ on a ray in  the negative Weyl chamber.

\begin{lemma}\label{lemmaeasybound} Let  $\phi=\Pc_\lambda(f)$ for  $f\in L^2(N)$. Then the following assertions
hold:
\begin{enumerate}
\item $\phi_a\in L^2(N)$ for all $a\in A$.
\item $\|\phi_a\|_{L^2(N)} \leq a^{\rho -\re \lambda}{\bf c}(\re \lambda) \|f\|_{L^2(N)}$.
\end{enumerate}
\end{lemma}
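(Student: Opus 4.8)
The plan is to deduce both assertions from the integral representation \eqref{P rewrite} together with the absolute convergence of the Harish-Chandra ${\bf c}$-function for $\re\lambda>0$. First I would record that \eqref{P rewrite} exhibits the partial function, up to the scalar ${\bf c}(\lambda)^{-1}a^{\lambda-\rho}$, as a convolution on $N$:
$${1\over {\bf c}(\lambda)} a^{\lambda-\rho}\,\phi_a = f * \check\delta_{\lambda,a}\, ,$$
where I write the convolution in whichever orientation matches the kernel $\delta_{\lambda,a}(n^{-1}x)$ appearing in \eqref{P rewrite}. The key quantitative input is the bound $\|\delta_{\lambda,a}\|_{L^1(N)}\le |{\bf c}(\lambda)|^{-1}a^{-2\rho}\,{\bf c}(\re\lambda)$, which follows by taking absolute values inside the defining integral for $\delta_{\lambda,a}$, using $|{\bf a}(a^{-1}xa)^{\lambda+\rho}|={\bf a}(a^{-1}xa)^{\re\lambda+\rho}$, and then substituting $x\mapsto a^{-1}xa$, whose Jacobian on $N$ is exactly $a^{2\rho}$; this turns the integral into $a^{-2\rho}\int_N {\bf a}(x)^{\re\lambda+\rho}\,dx = a^{-2\rho}\,{\bf c}(\re\lambda)$, the last being finite precisely because $\re\lambda>0$.

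Next I would invoke Young's inequality $\|f*g\|_{L^2(N)}\le \|f\|_{L^2(N)}\|g\|_{L^1(N)}$ for the unimodular group $N$. Applied to the displayed convolution this gives
$${1\over |{\bf c}(\lambda)|}\,a^{\re\lambda-\rho}\,\|\phi_a\|_{L^2(N)} \;\le\; \|f\|_{L^2(N)}\,\|\delta_{\lambda,a}\|_{L^1(N)} \;\le\; {1\over |{\bf c}(\lambda)|}\,a^{-2\rho}\,{\bf c}(\re\lambda)\,\|f\|_{L^2(N)}\, ,$$
and multiplying through by $|{\bf c}(\lambda)|\,a^{\rho-\re\lambda}$ yields assertion (2); in particular the right-hand side is finite, which is assertion (1). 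One should note here the minor technical point that $f\in L^2(N)$ and $\delta_{\lambda,a}\in L^1(N)$ together already guarantee $f*\delta_{\lambda,a}\in L^2(N)$, so the convolution is well defined and (1) is not a separate issue once the $L^1$-bound is in hand.

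The only real obstacle is bookkeeping: getting the modular/Jacobian factor $a^{2\rho}$ exactly right in the substitution $x\mapsto a^{-1}xa$ on $N$ (this is where the $-2\rho$ in the definition of $\delta_{\lambda,a}$ and the $\lambda-\rho$ prefactor in \eqref{P rewrite} conspire to produce the clean power $a^{\rho-\re\lambda}$), and checking the convolution is oriented consistently with the kernel in \eqref{P rewrite} so that Young's inequality applies verbatim. Neither of these is deep; they are the kind of normalization check that the ${\bf c}$-function integral $\int_N {\bf a}(n)^{\re\lambda+\rho}\,dn<\infty$ is designed to absorb. No holomorphic extension or crown geometry is needed for this lemma — that machinery enters only in the later Bergman-space estimates.
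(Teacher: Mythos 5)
Your approach is exactly the paper's: rewrite $\phi_a$ via \eqref{P rewrite} as a convolution against $\delta_{\lambda,a}$, bound $\|\delta_{\lambda,a}\|_{L^1(N)}$ using the absolute convergence of the ${\bf c}$-function integral, and apply Young's inequality. The one place you go wrong is precisely the bookkeeping step you flagged as the only obstacle: your stated bound $\|\delta_{\lambda,a}\|_{L^1(N)}\le |{\bf c}(\lambda)|^{-1}a^{-2\rho}\,{\bf c}(\re\lambda)$ carries a spurious factor $a^{-2\rho}$. The substitution $u=a^{-1}xa$ has $dx=a^{2\rho}\,du$, so
$$\int_N \bigl|{\bf a}(a^{-1}xa)^{\lambda+\rho}\bigr|\,dx \;=\; a^{2\rho}\int_N {\bf a}(u)^{\re\lambda+\rho}\,du \;=\; a^{2\rho}\,{\bf c}(\re\lambda)\,,$$
and this $a^{2\rho}$ exactly cancels the $a^{-2\rho}$ already present in the definition of $\delta_{\lambda,a}$, giving $\|\delta_{\lambda,a}\|_{L^1(N)}\le {\bf c}(\re\lambda)/|{\bf c}(\lambda)|$ with no power of $a$ (this is the bound the paper uses). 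As written, your chain of inequalities does not yield assertion (2): multiplying your last display by $|{\bf c}(\lambda)|\,a^{\rho-\re\lambda}$ produces $\|\phi_a\|_{L^2(N)}\le a^{-\rho-\re\lambda}{\bf c}(\re\lambda)\|f\|_{L^2(N)}$, which differs from the claimed estimate by $a^{-2\rho}$ and is neither stronger nor weaker uniformly in $a$. With the corrected $L^1$-bound the rest of your argument (Young's inequality on the unimodular group $N$, with (1) following from (2)) goes through verbatim and coincides with the paper's proof.
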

\begin{proof} Both assertions are immediate from the fact that $\|\delta_{{ \lambda,a}}\|_{L^1(N)} \leq
\frac {{\bf c}(\re \lambda)}{|{\bf c}(\lambda)|}$, \eqref{P rewrite} { and Young's convolution inequality}.
\end{proof}

\subsection{Partial holomorphic extensions of eigenfunctions}
{ Recall { $\Tc_a$ and $\Lambda_a$ from \eqref{defta}, resp.~\eqref{deflambdaa}, and} that the Poisson transform $\phi = \Pc_\lambda(f)$ belongs to $\Oc(\Xi_S)$ with all partial functions $\phi_a$ extending to holomorphic functions on $\Tc_a$.
For $y\in\exp(i\Lambda_a)$ we thus can define
$$\phi_{a,y}(n):=\phi_a(n y)\qquad (n \in N)\, .$$
Let $\delta_\lambda:=\delta_{\lambda, 1}$ and put
\begin{equation}\label{deltadef}\delta_{\lambda, y}:  N \to \C , \quad x \mapsto \delta_\lambda(y^{-1} x )\ .\end{equation}}
\begin{lemma} The following assertions hold:
\begin{enumerate} \item The function ${\bf v}_\lambda(y):=\sup_{k\in K} |{\bf a}(y^{-1} k)^{\lambda +\rho}|$ is finite for all $y \in \exp(i\Lambda)$.\\
\item The function $\delta_{\lambda, y}$ is integrable with $L^1(N)$-norm
\begin{equation} \label{delta bound2} v_\lambda(y):=\|\delta_{\lambda, y}\|_{ L^1(N)}\leq {\bf v}_\lambda(y)\frac{{\bf c}(\re \lambda)}{|\bf c(\lambda)|}\, .\end{equation}
\end{enumerate}
\end{lemma}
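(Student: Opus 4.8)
The plan is to reduce both assertions to the geometric fact that $\exp(iY)\in\tilde\Xi$ whenever $Y\in\Lambda$, combined with the left $G$-invariance of $\tilde\Xi$ and the holomorphically extended Iwasawa projections of Section~\ref{section crown}. For (1), fix $y=\exp(iY)$ with $Y\in\Lambda$; the first step is to check that $y^{-1}k\in\tilde\Xi^{-1}$ for every $k\in K$. Indeed, $Y\in\Lambda$ gives $\exp(iY)\cdot z_0\in\Xi$, hence $\exp(iY)K_\C\subset\tilde\Xi$ and in particular $\exp(iY)\in\tilde\Xi$; left $G$-invariance of $\tilde\Xi$ then yields $k^{-1}\exp(iY)\in\tilde\Xi$, i.e. $y^{-1}k\in\tilde\Xi^{-1}$. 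Since $\tilde\Xi^{-1}$ is precisely the domain on which ${\bf a}$, and the branch of $g\mapsto{\bf a}(g)^{\lambda+\rho}$ normalized at $e$, were defined holomorphically, the map $k\mapsto{\bf a}(y^{-1}k)^{\lambda+\rho}$ is continuous on the compact group $K$, and ${\bf v}_\lambda(y)<\infty$ follows at once.

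For (2), the key identity is the complexified Iwasawa cocycle ${\bf a}(y^{-1}x)={\bf a}(y^{-1}{\bf k}(x))\,{\bf a}(x)$ for $x\in N$. To establish it I would write $x={\bf k}(x){\bf a}(x)\oline n$ with ${\bf k}(x)\in K$, ${\bf a}(x)\in A$, $\oline n\in\oline N$ (Iwasawa for $x\in N\subset G$), note that both $y^{-1}x$ and $y^{-1}{\bf k}(x)$ lie in $\tilde\Xi^{-1}$ as in (1), decompose $y^{-1}{\bf k}(x)={\bf k}(y^{-1}{\bf k}(x))\,{\bf a}(y^{-1}{\bf k}(x))\,\oline m$ with $\oline m\in\oline N_\C$, and move ${\bf a}(x)$ to the left past $\oline m$ using that $A$ normalizes $\oline N_\C$. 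Comparison with the branch-normalized $K_\C A_\C\oline N_\C$ decomposition of $y^{-1}x$ gives the identity up to the a priori ambiguity in $F=K_\C\cap A_\C$, which is removed by continuity in $y$ over the connected set $\exp(i\Lambda)$ together with evaluation at $y=e$, where both sides reduce to ${\bf a}(x)$. The same connectedness argument makes the $(\lambda+\rho)$-powers multiply correctly, and since $\rho$ is real we obtain
$$\big|{\bf a}(y^{-1}x)^{\lambda+\rho}\big|=\big|{\bf a}(y^{-1}{\bf k}(x))^{\lambda+\rho}\big|\cdot{\bf a}(x)^{\re\lambda+\rho}\le{\bf v}_\lambda(y)\,{\bf a}(x)^{\re\lambda+\rho}\,.$$
Integrating over $N$ and using that $\re\lambda>0$ makes $\int_N{\bf a}(x)^{\re\lambda+\rho}\,dx={\bf c}(\re\lambda)$ finite, this yields
$$v_\lambda(y)=\frac{1}{|{\bf c}(\lambda)|}\int_N\big|{\bf a}(y^{-1}x)^{\lambda+\rho}\big|\,dx\le{\bf v}_\lambda(y)\,\frac{{\bf c}(\re\lambda)}{|{\bf c}(\lambda)|}\,,$$
which is \eqref{delta bound2}; in particular $\delta_{\lambda,y}\in L^1(N)$.

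The routine parts are the final integration and the identity $|a^{\lambda+\rho}|=a^{\re\lambda+\rho}$ for $a\in A$. The genuinely delicate point is the cocycle identity in the complex domain: one must be sure that $y^{-1}x$ and $y^{-1}{\bf k}(x)$ actually land in $\tilde\Xi^{-1}$, so that the holomorphic extension of ${\bf a}$ applies, and that the branch of ${\bf a}$ fixed via simple connectedness in Section~\ref{section crown} makes the multiplicative relation --- and its $(\lambda+\rho)$-power --- hold exactly, not merely modulo $F$. This is exactly where connectedness of $\exp(i\Lambda)$ and the normalization at $y=e$ enter.
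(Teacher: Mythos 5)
Your proposal is correct and follows essentially the same route as the paper: part (1) via the holomorphic extension of $\mathbf{a}$ to $\tilde\Xi^{-1}$ together with compactness of $K$, and part (2) via the cocycle identity $\mathbf{a}(y^{-1}x)=\mathbf{a}(y^{-1}\mathbf{k}(x))\,\mathbf{a}(x)$ followed by integration over $N$ against $\mathbf{c}(\re\lambda)$. The only difference is that you spell out the derivation of the cocycle identity (including the branch/$F$-ambiguity issue), which the paper simply asserts.
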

\begin{proof}
Part (1) is a consequence of the fact that ${\bf a}: G\to A$, considered as a
map from $K\bs G \to A$,   extends holomorphically
to $\Xi^{-1}\to A_\C$ with $\Xi^{-1}$ considered as a subset of $K_\C \bs G_\C$, see
\eqref{tilde Xi}. \\
For the proof of (2) we note the identity \begin{equation} \label{delta bound} \delta_\lambda(y^{-1} x )=\delta_\lambda(x)  {\bf a}(y^{-1} {\bf k}(x))^{\lambda +\rho}
\qquad (x \in  N, y\in \exp(i\Lambda)),\end{equation}
where ${\bf k}: G \to K$ is defined by the opposite Iwasawa decomposition $G=KA\oline N$.
Combined with part (1), \eqref{delta bound} implies that for all $y \in \exp(i\Lambda)$ the function
$\delta_{\lambda, y}$
is integrable on $N$, with the asserted estimate \eqref{delta bound2} for its norm.
\end{proof}
{ For $g, x\in G_\C$ we use the standard abbreviation $x^g:=gxg^{-1}$.}

\begin{lemma}\label{lemma a bound} For $\re \lambda>0$, $f\in L^2(N)$ and $\phi=\Pc_\lambda(f)$ we have

\begin{equation}\label{upper a-bound}  \|\phi_{a, y}\|_{L^2(N)} \leq {|\bf c}(\lambda)| a^{\rho -\re \lambda}\|\delta_{\lambda, y^{a^{-1}}}\|_{L^1(N)}
\|f\|_{L^2(N)}\qquad (y\in \exp(i\Lambda_a))\, .\end{equation}
\end{lemma}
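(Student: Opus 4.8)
The plan is to push the Poisson integral \eqref{P rewrite} holomorphically into the tube along the $N$-variable and then estimate the resulting integral by Minkowski's inequality, all the preparatory work on the kernel having been carried out in the preceding lemma. First I would record the elementary identity $\delta_{\lambda,a}(z)=a^{-2\rho}\,\delta_\lambda(a^{-1}za)$, which is immediate from the definitions of $\delta_{\lambda,a}$ and $\delta_\lambda=\delta_{\lambda,1}$. Substituting $x\mapsto a\xi a^{-1}$ in \eqref{P rewrite} (the map $\xi\mapsto a\xi a^{-1}$ on $N$ has Jacobian $a^{2\rho}$) and abbreviating $m:=a^{-1}na$, $F_a(\xi):=f(a\xi a^{-1})$, one rewrites the Poisson integral in the conjugated form
\[ \phi_a(n)={\bf c}(\lambda)\,a^{\rho-\lambda}\int_N F_a(\xi)\,\delta_\lambda(m^{-1}\xi)\,d\xi\qquad(n\in N,\ m=a^{-1}na)\,; \]
thus, up to the scalar $a^{\rho-\lambda}$ and the conjugation, $\phi_a$ is a right convolution of $F_a$ with $\delta_\lambda$, and the same change of variables gives $\|F_a\|_{L^2(N)}=a^{-\rho}\|f\|_{L^2(N)}$.

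Next I would extend this representation holomorphically in $n$. For $y\in\exp(i\Lambda_a)$ put $\tilde y:=a^{-1}ya=y^{a^{-1}}$; since $\Lambda_a=\Ad(a)\Lambda$ one has $\tilde y\in\exp(i\Lambda)$, and replacing $n$ by $ny$ turns $m=a^{-1}na$ into $m\tilde y$. The preceding lemma supplies exactly what is needed: $\delta_{\lambda,\tilde y}(x)=\delta_\lambda(\tilde y^{-1}x)$ belongs to $L^1(N)$ and, by \eqref{delta bound}, satisfies $|\delta_{\lambda,\tilde y}|\le{\bf v}_\lambda(\tilde y)\,|\delta_\lambda|$ with ${\bf v}_\lambda$ finite and (being a supremum over the compact group $K$ of a jointly continuous function) continuous, hence locally bounded, on $\exp(i\Lambda)$. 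Using this domination, together with $\delta_\lambda(m^{-1}\,\cdot\,)\in L^2(N)$ for $\re\lambda>0$ so that $F_a\,\delta_\lambda(m^{-1}\,\cdot\,)\in L^1(N)$ by Cauchy--Schwarz, one differentiates under the integral sign to see that the right-hand side of the displayed formula extends to a holomorphic function of $ny$ on $\Tc_a$; since $N$ is a maximally totally real submanifold of the connected complex manifold $\Tc_a$ and the two sides agree on $N$ by \eqref{P rewrite}, the identity theorem yields
\[ \phi_{a,y}(n)={\bf c}(\lambda)\,a^{\rho-\lambda}\int_N F_a(m\eta)\,\delta_{\lambda,\tilde y}(\eta)\,d\eta\qquad(n\in N,\ m=a^{-1}na)\,. \]
If one prefers to avoid the domination argument, one may instead first prove this for $f\in C_c^\infty(N)$, where the holomorphic extension of $\phi$ to $\Tc_a$ is furnished by the general theory of Section~\ref{section crown}, and then pass to the $L^2(N)$-limit using the continuity of $\Pc_\lambda$ on $L^2(N)$.

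The estimate is then mechanical. Minkowski's integral inequality and the right-translation invariance of Haar measure on the unimodular group $N$ give
\[ \Bigl\|\int_N F_a(\,\cdot\,\eta)\,\delta_{\lambda,\tilde y}(\eta)\,d\eta\Bigr\|_{L^2(N)}\ \le\ \int_N\|F_a(\,\cdot\,\eta)\|_{L^2(N)}\,|\delta_{\lambda,\tilde y}(\eta)|\,d\eta\ =\ \|F_a\|_{L^2(N)}\,\|\delta_{\lambda,\tilde y}\|_{L^1(N)}\,. \]
Finally I would undo the conjugation: passing from $m=a^{-1}na$ back to $n$ multiplies the squared $L^2(N)$-norm by $a^{2\rho}$, which combines with $|a^{\rho-\lambda}|^2=a^{2(\rho-\re\lambda)}$ and $\|F_a\|_{L^2(N)}=a^{-\rho}\|f\|_{L^2(N)}$ to produce precisely
\[ \|\phi_{a,y}\|_{L^2(N)}\ \le\ |{\bf c}(\lambda)|\,a^{\rho-\re\lambda}\,\|\delta_{\lambda,y^{a^{-1}}}\|_{L^1(N)}\,\|f\|_{L^2(N)}\,, \]
which is \eqref{upper a-bound}.

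The step I expect to be the genuine obstacle is the holomorphic continuation: one must ensure that, once the boundary-direction variable is moved off $N$ into $\exp(i\Lambda_a)$, the Poisson integral still converges and still represents the (separately defined) holomorphic extension $\phi_{a,y}$, rather than diverging or representing some other branch. This is precisely where the preceding lemma --- integrability of $\delta_{\lambda,\tilde y}$ via \eqref{delta bound}, and finiteness and continuity of ${\bf v}_\lambda$ on $\exp(i\Lambda)$ --- together with the identity theorem (or, alternatively, the density reduction to $C_c^\infty(N)$) carries the weight; the remaining manipulations are merely bookkeeping of the $A$-scaling factors $a^{\rho-\lambda}$ and $a^{2\rho}$.
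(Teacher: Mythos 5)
Your proposal is correct and follows essentially the same route as the paper: both continue the Poisson integral \eqref{P rewrite} to $ny$ with $y\in\exp(i\Lambda_a)$, bound the resulting convolution by Young's/Minkowski's inequality against the $L^1$-norm of the kernel, and use the change of variables $x\mapsto axa^{-1}$ to identify that norm with $\|\delta_{\lambda,y^{a^{-1}}}\|_{L^1(N)}$ and account for the powers of $a$. The paper compresses this into two displayed lines, leaving the analytic-continuation step and the conjugation bookkeeping implicit, whereas you spell both out; the content is the same.
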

\begin{proof} From \eqref{P rewrite} we obtain
$$ {1\over {\bf c}(\lambda)}  a^{\lambda-\rho} \phi_{a,y}(n) = \int_N f(x)\delta_{\lambda, a}(y^{-1}n^{-1}x) \ dx $$
and thus
\begin{equation} \label{deltaest}  {1\over |{\bf c}(\lambda)|}  a^{\re \lambda-\rho}\| \phi_{a,y}\|_{L^2(N)} \leq
\|\delta_{\lambda, a}(y^{-1}\cdot)\|_{L^1(N)} \|f\|_{L^2(N)}.
\end{equation}
{ Next we unwind the defintions \eqref{deltadefa} and \eqref{deltadef} and apply the change of variable $x\mapsto a^{-1}xa$ on $N$:
\begin{align*} \|\delta_{\lambda, a}(y^{-1}\cdot)\|_{L^1(N)}&={a^{-2\rho} \over |{\bf c}(\lambda)|}\int_N  \left|{\bf a} (a^{-1} y^{-1} x a)^{\lambda+\rho}\right|\ dx \\
&={a^{-2\rho} \over |{\bf c}(\lambda)|} \int_N  \left|{\bf a} ((a^{-1} y^{-1}a) a^{-1} x a)^{\lambda+\rho}\right|\ dx \\
&={1\over |{\bf c}(\lambda)|} \int_N   \left| {\bf a} ((y^{-1})^{a^{-1}} x )^{\lambda+\rho}\right|\ dx =\|\delta_{\lambda, y^{a^{-1}}}\|_{L^1(N)} \, .\end{align*}
The assertion \eqref{upper a-bound} now follows from \eqref{deltaest}.}  \end{proof}

\subsection{A class of weight functions}\label{subsection weight functions}
We  now let ${\bf w}_\lambda: \exp(i\Lambda)\to \R_{>0}$ be any positive continuous function such that
\begin{equation} \label{request w} d(\lambda):=\int_{\exp(i\Lambda)} {\bf w}_\lambda(y) \|\delta_{\lambda, y}\|^2_{ L^1(N)} \ dy <\infty\end{equation}
and define a left $N$-invariant function on the tube $\Tc_a$ by

$${\bf w}_{\lambda,a}: \Tc_a\to \R_{>0}, \quad n y\mapsto {\bf w}_\lambda (\Ad(a^{-1})y)\qquad (y\in \exp(i\Lambda_a))\, .$$

\begin{rmk} In general we expect that $\Lambda$ is bounded. In view
of \eqref{delta bound2} one may then take
$${\bf w}_\lambda \equiv 1\ ,$$
as ${\bf v}_\lambda^{-2}$ is bounded from below by a positive constant.
Optimal choices for ${\bf w}_\lambda$ in special cases will be presented at the end of the article.
\end{rmk}

We  now show that $\phi_a\in \Oc(\Tc_a)$ belongs to the weighted Bergman
space
$$\B(\Tc_a, {\bf w}_{\lambda,a}):=\{ \psi\in \Oc(\Tc_a)\mid \|\psi\|^2_{\B_{a, \lambda}}:=
\int_{\Tc_a} |\psi(z)|^2 {\bf w}_{\lambda,a}(z)  dz <\infty\},$$
where $dz$ is the Haar measure on $N_\C$ restricted to $\Tc_a$.
In more precision with $d(\lambda)$ from \eqref{request w}
we record the following { lemma}
\begin{lemma} \label{lemma5.5} Let $\re \lambda>0$, $f\in L^2(N)$ and $\phi=\Pc_\lambda(f)$. Then we have the following inequality
\begin{equation}\label{normb1}
\|\phi_a\|_{\B_{a,\lambda}}
\leq  |{\bf c}( \lambda)| \sqrt{d(\lambda)}  a^{2\rho-\re \lambda}\|f\|_{L^2(N)}\, .\end{equation}
\end{lemma}
\begin{proof} Starting with \eqref{upper a-bound} the assertion follows from the estimate
\begin{align*} \notag \|\phi_a\|_{\B_{a,\lambda}}&\leq a^{\rho- \re \lambda} |{\bf c}(\lambda)| \left[\int_{\exp(i\Lambda^a)}
{\bf w}_\lambda(y^{a^{-1}}) \|\delta_{\lambda, y^{a^{-1}}}\|^2_{ L^1(N)} \ dy]\right]^{1\over 2}
\|f\|_{L^2(N)}\\
\notag&= |{\bf c}(\lambda)| \left[\int_{\exp(i\Lambda)}
{\bf w}_\lambda(y) \|\delta_{\lambda, y}\|^2_{ L^1(N)} \ dy\right]^{1\over 2}
 a^{2\rho-\re \lambda}\|f\|_{L^2(N)}\\
&=
 |{\bf c}(\lambda)| \sqrt{d(\lambda)}  a^{2\rho-\re \lambda}\|f\|_{L^2(N)}\,,
\end{align*}
as desired.
\end{proof}

{ Lemma \ref{lemma5.5}} motivates the definition of the following Banach  subspace of $\E_{[\lambda]}({ S})\subset \Oc(\Xi_S)$:

$$\B(\Xi_S, \lambda):=\{ \phi \in \E_{[\lambda]}({ S})\mid
\|\phi\|:=\sup_{a\in A} a^{\re\lambda -2\rho} \|\phi_a\|_{\B_{a,\lambda}}<\infty\}\, .$$
Indeed, \eqref{normb1} implies
\begin{equation} \label{P cont} \|\Pc_\lambda(f)\|\leq C \|f\|_{L^2(N)} \qquad (f\in L^2(N))\end{equation}
with  $C:={\bf c}(\re \lambda) \sqrt{d(\lambda)}$ and therefore the first inequality
in { Theorem \ref{maintheorem}}.

\begin{proof}[{ Proof of Theorem \ref{maintheorem}}] Since $\re\lambda>0$, the Poisson transform is injective. Further, \eqref{P cont}
shows that $\Pc_\lambda$ takes values in  $\B(\Xi_S, \lambda)$ and is continuous.
In view of the open mapping theorem,  it thus suffices to show that
$\Pc_\lambda$ is surjective. Note now that the weight ${\bf w}_\lambda$ is uniformly bounded above and below by positive constants when restricted to a compact subset $\exp(i\Lambda_c)\subset \exp(i\Lambda)$.  Hence the Bergman inequality implies the bound
\begin{equation} \label{norm 1} \|\psi|_N\|_{L^2(N)} \leq C a^{-\rho} \|\psi\|_{\B_{a,\lambda}}\quad (\psi \in \B(\Tc_a, {\bf w}_{\lambda, a})).\end{equation}
We apply this to $\psi=\phi_a$ for some $\phi\in \B(\Xi_S,\lambda)$ and obtain that
$a^{\lambda -\rho} \phi_a|_N $ is  bounded in $L^2(N)$. Hence we obtain for some sequence $(a_n)_{n\in \N}$ on a ray in $A^-$ that
$a_n^{\lambda-\rho} \phi_{a_n}|_N \to h$ weakly for some $h \in L^2(N)$.

By the Helgason conjecture we know that $\phi = \Pc_\lambda(f)$ for some $f\in C^{-\omega}_\lambda(N)$ and that

\begin{equation} \label{limit} {\bf c}(\lambda)^{-1} a^{\lambda -\rho} \phi_a|_N \to f\end{equation}
as appropriate hyperfunctions on $N$ for $a\to \infty$ in $A^-$ on a ray. Hence $h=f$ and we obtain
the second inequality of the theorem.
\end{proof}

\subsection{The norm limit formula}
\label{sub:norm}

Define a positive  constant
\begin{equation} \label{def w const} w(\lambda):=\left[\int_{\exp(i\Lambda)} {\bf w}_\lambda(y) \ dy\right]^{1\over 2} .\end{equation}
Note that $w(\lambda)$ is indeed finite. This will follow from \eqref{request w} provided we can show that $\|\delta_{\lambda, y}\|_1\geq 1$. Now, using Cauchy's theorem
we see that
\begin{equation} \label{cy} \int_N {\bf a} (y^{-1} n)^{\lambda +\rho} \ dn = {\bf c}(\lambda)\end{equation}
does not depend on $y\in \exp(i\Lambda)$. The estimate $\|\delta_{\lambda, y}\|_{ L^1(N)}\geq 1$ follows.

The purpose of this section is to prove the norm limit formula as stated in the introduction.
\begin{proof}[{ Proof of Theorem \ref{norm limit intro}}] In the sequel we first note that for any integrable function $\psi$ on $\Tc_a$ we have
$$
\int_{\Tc_a} |\psi(z)|^2\ dz = \int_{\Lambda_a} \int_N |\psi(yn)|^2 \ dn \ dY
$$
with $y=\exp(iY)$ and $dY$ the Lebesgue measure on $\nf$. With that
we rewrite the square of the left hand side of \eqref{norm limit2} as
\begin{align*} &{1\over w(\lambda)^2|{\bf c}(\lambda)|^2} a^{2\re \lambda - 4\rho} \|\phi_a\|_{\B_{a,\lambda}}^2= \\
&= {1\over w(\lambda)^2|{\bf c}(\lambda)|^2} a^{2\re \lambda - 4\rho} \int_{\Lambda_a}\int_N  |\phi_a(ny)|^2 {\bf w}_{\lambda, a} (y) \ dn \ dY  \\
&= {1\over w(\lambda)^2|{\bf c}(\lambda)|^2}a^{2\re \lambda - 2\rho} \int_{\Lambda}\int_N  |\phi_a(ny^a)|^2 {\bf w}_{\lambda, \1} (y) \ dn \ dY  \\
&= {1\over w(\lambda)^2|{\bf c}(\lambda)|^2}a^{2\re \lambda - 2\rho} \int_{\Lambda}\int_N \left|\int_N f(x) {\bf a} (y^{-1} a^{-1} n^{-1}  x)^{\lambda +\rho} \ dx \right|^2 {\bf w}_{\lambda, \1} (y) \ dn \ dY  \\
&={1\over w(\lambda)^2|{\bf c}(\lambda)|^2}  a^{-4\rho} \int_{\Lambda}\int_N \left|\int_N f(x) {\bf a} (y^{-1} a^{-1} n^{-1}  xa)^{\lambda +\rho} \ dx \right|^2 {\bf w}_{\lambda, \1} (y) \ dn \ dY\, .  \\
\end{align*}
Next we consider the function on $N$
$$\delta_{\lambda, y, a}(n):={1\over {\bf c}(\lambda)} a^{-2\rho} {\bf a} (y^{-1} a^{-1} na)^{\lambda +\rho} $$
and observe that this defines for any fixed $y\in \exp(i\Lambda)$ a Dirac-sequence when
$a\in A^-$ moves along a fixed ray to infinity, see \eqref{cy} for $\int_N \delta_{\lambda, a,y}= 1$.
We thus arrive at
\begin{multline*} {1\over w(\lambda)^2|{\bf c}(\lambda)|^2} a^{2\re \lambda - 4\rho} \|\phi_a\|_{\B_{a,\lambda}}^2\\
= {1\over w(\lambda)^2} \int_{\Lambda}\int_N \left|\int_N f(x) \delta_{\lambda, y, a} (n^{-1}x) \  dx \right|^2 {\bf w}_{\lambda, \1} (y) dn \ dY  \ .
\end{multline*}
We define a convolution type operator
$$T_{\lambda, y, a}: L^2(N) \to L^2(N), \quad f\mapsto \left(n \mapsto \int_N f(x) \delta_{\lambda, y, a}(n^{-1}x)\ dx\right) $$
and note that by Young's convolution inequality
$$\|T_{\lambda, y, a}(f)\|_{ L^2(N)} \leq \|\delta_{\lambda, y, a}\|_{ L^1(N)} \cdot \|f\|_{ L^2(N)}\ .$$
We continue with some standard estimates:
\begin{align*} &\left|\int_N \left|\int_N f(x) \delta_{\lambda, y, a} (n^{-1}x) \  dx \right|^2 \ dn \   - \|f\|^2_{ L^2(N)}\right|=  \left|\| T_{\lambda, y, a}(f)\|^2_{ L^2(N)} - \|f\|^2_{ L^2(N)}\right|\\
&\quad  = \left| \|T_{\lambda, y, a}\|_{ L^2(N)} - \|f\|_{ L^2(N)}\right| \cdot  (\|T_{\lambda, y, a}(f) \|_{ L^2(N)} + \|f\|_{ L^2(N)})\\
&\quad \leq \|T_{\lambda, y, a}(f) - f\|_{ L^2(N)} \cdot \|f\|_{ L^2(N)}( 1+ \|\delta_{\lambda, y, a}\|_{ L^1(N)})\\
&\quad =\left\|\int_N (f(\cdot x)  - f(\cdot)) \delta_{\lambda, y, a} (x) \  dx \right\|_{ L^2(N)} \cdot \|f\|_{ L^2(N)}( 1+ \|\delta_{\lambda, y, a}\|_{ L^1(N)})\\
&\quad \leq \|f\|_{ L^2(N)}( 1+ \|\delta_{\lambda, y, a}\|_{ L^1(N)})\int_N \|f(\cdot x) - f(\cdot)\|_{ L^2(N)} |\delta_{\lambda, y, a}(x)| \ dx \, .
\end{align*}
Now note that $x\mapsto \|f(\cdot x) - f(\cdot)\|_{ L^2(N)}$ is a bounded continuous function and $\frac{|\delta_{\lambda, y, a}|}{\|\delta_{\lambda, y,a}\|_{ L^1(N)}}$ is a Dirac-sequence
for $a\to \infty$ in $A^-$ on a ray. Hence we obtain a positive function
$\kappa_f(a)$ with $\kappa_f(a) \to 0$ for $a\to \infty$ in $A^-$ on a ray such that

$$\int_N \|f(\cdot x) - f(\cdot)\|_{ L^2(N)} |\delta_{\lambda, y, a}(x)| \ dx \leq \|\delta_{\lambda, y,a}\|_{ L^1(N)} \kappa_f (a)\, .$$

Putting matters together we have shown that
\begin{align*}& \left|{1\over |{\bf c}(\lambda)|^2} a^{2\re \lambda - 4\rho} \|\phi_a\|_{\B_{a,\lambda}}^2 -(\int_\Lambda {\bf w}_{\lambda,\1})\cdot \|f\|_{ L^2(N)}\right|\\
&\quad \le\kappa_f(a)\|f\|^2_{ L^2(N)} \int_{\Lambda} (1 +\|\delta_{\lambda, y, a}\|_{ L^1(N)}) \|\delta_{\lambda, y, a}\|_{ L^1(N)} {\bf w}_{\lambda, \1}(y) \ dy\ . \end{align*}
Finally observe that $\|\delta_{\lambda, y, a}\|_{ L^1(N)} =\|\delta_{\lambda, y}\|_{ L^1(N)}$
and hence
$$\int_{\Lambda} (1 +\|\delta_{\lambda, y, a}\|_{ L^1(N)}) \|\delta_{\lambda, y, a}\|_{ L^1(N)} {\bf w}_{\lambda, \1}(y) \ dy <\infty\ ,$$
by the defining condition \eqref{request w} for  ${\bf w}_\lambda$.
With that the proof of the norm limit formula \eqref{norm limit2}, i.e.~Theorem \ref{norm limit intro}, is complete.
\end{proof}

\section{The real hyperbolic space}\label{sect hyp}

In this section we investigate how the main results of this article take shape in the case of real hyperbolic spaces. After recalling the explicit formulas of the Poisson kernel we
provide essentially sharp estimates for $\|\delta_{\lambda, y}\|_{ L^1(N)}$ which allow us to perform the construction of a family of nice explicit weight functions ${\bf w}_\lambda$ satisfying \eqref{request w}.
These in turn have the property that for real parameters $\lambda=\re \lambda$ the weighted Bergman space $\B(\Xi_S, \lambda)$ becomes isometric to $L^2(N)$. In particular,
the Banach space $\B(\Xi_S, \lambda)$  is in fact a Hilbert space for the exhibited family of weights.

\subsection{Notation} Our concern is with the real hyperbolic space  $ \mathbf{H}_n(\R) = G/K $ where $ G = \SO_e(n+1,1)$ and  $K = \SO(n+1)$ for $n\geq 1$. Here $\SO_e(n+1,1)$ is the identity component of the group $\SO(n+1,1)$.    The Iwasawa decomposition $ G = KAN $ is given by $ N = \R^n$, $K = \SO(n+1) $ and $ A = \R_+.$  and we can identify $ \mathbf{H}_n(\R) $ with the upper half-space $ \R^{n+1}_+  = \R^n \times \R_+ $  equipped with the Riemannian metric $ g = a^{-2} (|dx|^2+da^2 ).$ 
For any $ \lambda \in \C $  which is not a pole of $\Gamma(\lambda+n/2)$  we consider the normalized kernels
 $$  p_\lambda(x, a) =   \pi^{-n/2} \frac{\Gamma(\lambda+n/2)}{\Gamma(2\lambda)} a^{\lambda+n/2}(a^2+|x|^2)^{-(\lambda+n/2)}, $$
 which play the role of the normalized Poisson kernel when $ \mathbf{H}_n(\R) $ is identified with the group $ S = NA$, $N =\R^n$, $A = \R_+.$  In fact, with ${\bf a}: G \to A$ the Iwasawa projection
with { respect} to $G=KA\oline N$ as in the main text we record for $x\in N=\R^n$ that
$${\bf a}(x)^{\lambda+\rho} = ( 1 +|x|^2)^{-(\lambda +n/2)}\, .$$
Further we have
$${\bf c}(\lambda)=  \pi^{n/2} \frac{\Gamma(2\lambda)}{\Gamma(\lambda+n/2)} $$
so that
$$  p_\lambda(x, a) = {1\over {\bf c}(\lambda)} {\bf a}(a^{-1} x)^{\lambda +\rho}.
$$
In the sequel we assume that $s:=\re \lambda>0$ and note that $\rho=n/2$.

 The classical Poisson transform (normalize \eqref{Poisson} by ${1\over {\bf c}(\lambda)}$) of a function $ f \in L^2(\R^n) $ is then given by
 \begin{align*} \mathcal{P}_\lambda f(x,a) &= f*p_{\lambda}(\cdot, a)\\
 &=\pi^{-n/2}  \frac{\Gamma(\lambda+n/2)}{\Gamma(2\lambda)} a^{-(\lambda+n/2)}  \int_{\R^n} f(u) (1+a^{-2} |x-u|^2)^{-\lambda-n/2} du\, \end{align*}
with $\ast$ the convolution on $N=\R^n$. It is easy to check that $  \mathcal{P}_\lambda f(x,a) $ is an eigenfunction of the Laplace--Beltrami operator $ \Delta $ with eigenvalue $\lambda^2- (n/2)^2.$

 From the explicit formula for the Poisson kernel it is clear that for each $ a \in A $ fixed, $ \mathcal{P}_\lambda f(x, a) $ has a holomorphic extension to the tube domain
 $$ \Tc_a:=  \{ x+iy \in \C^n \mid |y| < a \} = N\exp(i\Lambda_a)\subset N_\C=\C^n, $$
 where $ \Lambda_a = \{ y \in \R^n :  |y| < a \}.$ Writing $ \phi_a(x) = \mathcal{P}_\lambda f(x,a) $ as in (\ref{P rewrite}) we see that
 $$
 \delta_{\lambda, y}(x)=  {1\over {\bf c}(\lambda)} (1+(x+iy)^2)^{-(\lambda+n/2)}.
 $$
 A weight function $ {\bf w}_\lambda $ satisfying (\ref{request w}), namely
 $$ d(\lambda) = \int_{|y| <1} {\bf w}_\lambda(y) \|\delta_{\lambda,y}\|^2_{ L^1(\R^n)} \, dy < \infty$$
 can be easily found. Indeed, as
 $$ (1+z^2)^{-(n/2+\lambda)} = \frac{2^{-n-\lambda}}{\Gamma(\lambda+n/2)}  \int_0^\infty e^{-\frac{1}{4t} (1+z^2)} t^{-n/2-\lambda-1} dt $$
where $ z^2 = z_1^2+z_2^2+...+z_n^2$ we have
$$ |\delta_{\lambda,y}(x)| \leq c_\lambda \int_0^\infty e^{-\frac{1}{4t} (1-|y|^2+|x|^2)} t^{-n/2-s-1} dt $$
valid for $ |y| <1.$ From this it is immediate that we have the estimate
$$  \|\delta_{\lambda,y}\|_{ L^1(\R^n)} \leq c_\lambda (1-|y|^2)_+^{-s}\, .$$

However this bound is not optimal and we can do better with slightly more effort. This will be part of the next subsection.

\subsection{Bounding $\|\delta_{\lambda, y}\|_{ L^1(\R^n)}$ and special weights.}
\begin{lemma}\label{deltabound} For $s=\re \lambda>0$ we have  for a constant $C=C(\lambda, n)>0$ that
$$\|\delta_{\lambda, y}\|_{ L^1(\R^n)} \asymp
\begin{cases*} C & if $0<s<\frac{1}{2}$,\\
C |\log(1-|y|^2)_+| & if $s=\frac{1}{2}$,\\
C (1-|y|^2)_+^{-s+\frac{1}{2}} & if $s>\frac{1}{2}$,
\end{cases*} \qquad \qquad (|y|<1).$$
\end{lemma}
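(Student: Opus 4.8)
The plan is to evaluate $\|\delta_{\lambda,y}\|_1=|{\bf c}(\lambda)|^{-1}\int_{\R^n}\bigl|1+(x+iy)^2\bigr|^{-(s+n/2)}\,dx$ by reducing it to a double integral in the variable $u$ parallel to $y$ and a radial variable $v$ transverse to $y$. Write $r=|y|\in[0,1)$ and $t=1-r^2$, and (for $n\ge 2$) decompose $x=u\,\hat y+x^\perp$ with $\hat y=y/r$, $x^\perp\perp y$, $v=|x^\perp|$. Since $(x+iy)^2=|x|^2-r^2+2i\langle x,y\rangle$ one gets $\bigl|1+(x+iy)^2\bigr|^2=(t+u^2+v^2)^2+4r^2u^2$, and with $b=t+v^2$ the identity $b+r^2=1+v^2$ yields the factorization
$$\bigl|1+(x+iy)^2\bigr|^2=\bigl(u^2+P^2\bigr)\bigl(u^2+Q^2\bigr),\qquad P=\sqrt{1+v^2}-r,\quad Q=\sqrt{1+v^2}+r .$$
Passing to polar coordinates in $x^\perp$ turns $\|\delta_{\lambda,y}\|_1$ into a constant times $\int_0^\infty v^{n-2}\int_{\R}\bigl[(u^2+P^2)(u^2+Q^2)\bigr]^{-(s+n/2)/2}\,du\,dv$, where $Q\asymp 1+v$ and $P=(t+v^2)/Q\asymp (t+v^2)/(1+v)$. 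For $n=1$ there is no $x^\perp$, the inner $u$-integral is already $\|\delta_{\lambda,y}\|_1$, and the steps below simplify accordingly.

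I would then localise the singular part. On $\{v\ge 1\}$ one has $P\asymp Q\asymp v$, the integrand is $\asymp(u^2+v^2)^{-(s+n/2)}$, and the contribution is $\asymp\int_1^\infty v^{-1-2s}\,dv$, a finite constant because $s>0$; similarly $\{v\le 1,\ |u|\ge 1\}$ contributes $O(1)$ uniformly in $y$. On $\{v\le 1,\ |u|\le 1\}$ one has $Q\asymp 1$, so $u^2+Q^2\asymp 1$ and $P\asymp t+v^2$, giving altogether
$$\|\delta_{\lambda,y}\|_1\asymp 1+J(t),\qquad J(t):=\int_0^1 v^{n-2}\int_{-1}^1\bigl(u^2+(t+v^2)^2\bigr)^{-(s+n/2)/2}\,du\,dv .$$

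The crux is to evaluate $J(t)$ as $t\to 0^+$. After $u=(t+v^2)\sigma$ the inner integral equals $(t+v^2)^{1-s-n/2}\int_{|\sigma|\le(t+v^2)^{-1}}(1+\sigma^2)^{-(s+n/2)/2}\,d\sigma$, which is $\asymp(t+v^2)^{1-s-n/2}$ since $s+n/2>1$ for $n\ge 2$; then $v=\sqrt t\,\xi$ gives
$$J(t)\asymp t^{\,1/2-s}\int_0^{1/\sqrt t}\xi^{n-2}(1+\xi^2)^{1-s-n/2}\,d\xi ,$$
and the integrand decays like $\xi^{-2s}$ at infinity. This is precisely where the trichotomy appears: for $s>\tfrac12$ the integral converges, so $J(t)\asymp t^{1/2-s}=(1-|y|^2)^{-s+1/2}$; for $s=\tfrac12$ it grows like $\log(1/\sqrt t)$, so $J(t)\asymp|\log(1-|y|^2)|$; for $s<\tfrac12$ it converges and $J(t)\asymp t^{1/2-s}\to 0$, so the constant term dominates and $\|\delta_{\lambda,y}\|_1\asymp 1$. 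For $n=1$ the same three cases follow directly from $\int_{-1}^1(u^2+t^2)^{-(s+1/2)/2}\,du$ according to whether $s+\tfrac12$ is $>1$, $=1$, or $<1$.

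For the matching lower bounds when $s\ge\tfrac12$ I would restrict the integral to the slab $\{\sqrt t\le v\le 1,\ |u|\le v^2\}$, on which $\bigl|1+(x+iy)^2\bigr|^2\asymp v^4$, so the contribution is $\asymp\int_{\sqrt t}^1 v^{-2s}\,dv$, which equals the claimed order ($\asymp t^{1/2-s}$ for $s>\tfrac12$, $\asymp|\log t|$ for $s=\tfrac12$); for $s<\tfrac12$ the universal lower bound $\|\delta_{\lambda,y}\|_1\ge 1$ from \eqref{cy} closes the estimate (and for $n=1$ one uses the slabs $\{|u|\le t\}$, resp.\ $\{t\le|u|\le1\}$). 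It is worth noting that, since $\|\delta_{\lambda,y}\|_1\ge 1$ throughout $|y|<1$, the $s=\tfrac12$ row is to be read with an additive constant, $\|\delta_{\lambda,y}\|_1\asymp 1+|\log(1-|y|^2)_+|$, so that the bound is not vacuous near $y=0$. The main obstacle is the bookkeeping isolating the $y$-parallel direction $u$ as the only source of the blow-up — i.e.\ the comparison $\bigl|1+(x+iy)^2\bigr|^2\asymp u^2+(t+v^2)^2$ on the relevant region — together with tracking the borderline convergence at $\xi=\infty$ responsible for the logarithm at $s=\tfrac12$; everything else is routine rescaling.
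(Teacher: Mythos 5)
Your argument is correct and reaches the same trichotomy, but it travels a genuinely different road from the paper's. The paper first linearizes the modulus, $|1+(x+iy)^2|\asymp (1-|y|^2)+|x|^2+2|\langle x,y\rangle|$, then pulls the entire singular factor out at once by the global rescaling $x\mapsto\gamma x$ with $\gamma=\sqrt{1-|y|^2}$, obtaining $\|\delta_{\lambda,y}\|_1\asymp\gamma^{-2s}I_n(s,\gamma)$; a polar-coordinate computation in the transverse variable then factors $I_n(s,\gamma)\asymp J_n(s)\,I_1(s,\gamma)$ with $J_n(s)<\infty$, and the one-dimensional integral $I_1(s,\gamma)$ is estimated directly (explicitly at $s=\tfrac12$). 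You instead keep the exact factorization $|1+(x+iy)^2|^2=(u^2+P^2)(u^2+Q^2)$, localize the singularity to the box $\{v\le1,|u|\le1\}$, and extract the power of $t=1-|y|^2$ by two successive rescalings, with a separate slab argument for the lower bounds. The paper's route is shorter because the single substitution $x\mapsto\gamma x$ does all the bookkeeping at once and the upper and lower bounds come together from the two-sided $\asymp$; your route is more modular (upper and lower bounds are decoupled, and the exact location of the blow-up --- $v\lesssim\sqrt t$, $|u|\lesssim t$ --- is made visible), at the cost of more case-checking. Both reduce, in the end, to the borderline convergence of a one-dimensional tail integral at $s=\tfrac12$.

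Two small points. First, in your treatment of $0<s<\tfrac12$ the clause ``it converges'' is wrong: the integrand $\xi^{n-2}(1+\xi^2)^{1-s-n/2}\asymp\xi^{-2s}$ is not integrable at infinity for $s\le\tfrac12$, so $\int_0^{1/\sqrt t}\asymp t^{s-1/2}$ and hence $J(t)\asymp t^{1/2-s}\cdot t^{s-1/2}\asymp1$ rather than $J(t)\asymp t^{1/2-s}\to0$. The conclusion $\|\delta_{\lambda,y}\|_1\asymp1+J(t)\asymp1$ is unaffected, but the intermediate claim should be corrected. Second, your observation that the $s=\tfrac12$ row must be read as $1+|\log(1-|y|^2)|$ is well taken: the stated asymptotics are meaningful only as $|y|\to1$ (the paper implicitly restricts to $|y|>\tfrac12$ in its own proof), since $\|\delta_{\lambda,y}\|_1\ge1$ everywhere by \eqref{cy} while the logarithm vanishes at $y=0$.
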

\begin{proof}
To begin with we have
\begin{align*}  \|\delta_{\lambda,y}\|_{ L^1(\R^n)}&\asymp \int_{\R^n} |1+(x+iy)^2|^{-(n/2+s)}\ dx \\
 &\asymp \int_{\R^n} (1-|y|^2 +|x|^2+ 2|\la x, y\ra|)^{-(n/2+s)}\ dx\ .
\end{align*}
With $\gamma=\sqrt{1-|y|^2}$ we find
\begin{align*}  \|\delta_{\lambda,y}\|_{ L^1(\R^n)}&\asymp  \int_{\R^n} (\gamma^2 +|x|^2+ 2|\la x, y\ra|)^{-(n/2+s)}\ dx\\
&=  \int_{\R^n} (\gamma^2 +\gamma^2|x|^2+ 2 \gamma |\la x, y\ra|)^{-(n/2+s)}\ \gamma^n dx\\
&=  \gamma^{-2s}\int_{\R^n} (1 +|x|^2+ 2 |\la x, \gamma^{-1}y \ra|)^{-(n/2+s)}\ dx\ .
\end{align*}
Set
$$I_n(s,\gamma):=\int_{\R^n} (1 +|x|^2+ 2 |\la x, \gamma^{-1}y \ra|)^{-(n/2+s)}\ dx\,. $$
Then it remains to show that
\begin{equation} \label{Ins}  I_n (s,\gamma) \asymp\begin{cases*} \gamma^{2s} & if $0<s<\frac{1}{2}$, \\
\gamma  |\log \gamma| & if $s=\frac{1}{2}$, \\
 \gamma & if $s>\frac{1}{2}$
\end{cases*} \, .\end{equation}

We first reduce the assertion to the case $n=1$ and assume $n\geq 2$.
By rotational symmetry we may assume that $y=y_1 e_1$ is a multiple of the first unit vector with $1/2<y_1 <1$. Further we write $x=(x_1,x')$ with $x'\in \R^{n-1}$. Introducing polar coordinates $r=|x'|$, we find
\begin{align*}
&I_n(s,\gamma) = \int_{\R^n} (1+ |x'|^2 +x_1^2+2 \gamma^{-1}|x_1|y_1 )^{-(n/2+s)}\ dx\\
& \asymp \int_0^\infty  \int_0^\infty r^{n-2} (1+ r^2 +x_1^2+2 \gamma^{-1}x_1y_1 )^{-(n/2+s)} dx_1 \ dr \, .
\end{align*}
With $a^2:=1 + x_1^2 +2x_1 y_1 \gamma^{-1}$  this rewrites as
$$I_n(s, \gamma)\asymp \int_0^\infty  \int_0^\infty r^{n-2} (r^2 +a^2)^{-(n/2+s)} \ dr \ dx_1 $$
and with the change of variable $r=at$ we arrive at a splitting of integrals

\begin{align*} I_n(s,\gamma) &\asymp  \int_0^\infty  \int_0^\infty t^{n-2} (1+t^2)^{-\frac{n}{2} -s} a^{- n - 2s} a^{n-2} a \ dt  \ dx_1 \\
&= \underbrace{\left(\int_0^\infty t^{n-2} (1+t^2)^{-\frac{n}{2} -s} \ dt \right)}_{:=J_n(s)} \cdot  \underbrace{\left (\int_0^\infty
( 1 + x_1^2 +2 \gamma^{-1} x _1 y_1)^{-s -\frac{1}{2}}  \ dx_1\right)}_{=I_1(s,\gamma)}\end{align*}
Now $J_n(s)$ remains finite as long as $n\geq 2$ and $s>0$. Thus we have reduced
the situation to the case of $n=1$ which we finally address.
\par  It is easy to check that  $ I_1(s,\gamma) \asymp \gamma^{2s}$ for $ 0 < s < 1/2 $ and  $ I_1(s,\gamma) \asymp \gamma $ for $ s >1/2$.  When $ s = 1/2 $ we can evaluate $ \gamma^{-1} I_1(1/2,\gamma) $ explicitly. Indeed, by a simple computation we see that $ \gamma^{-1} I_1(1/2,\gamma)$ is given by
$$ 2 \int_0^\infty \frac{1}{(x_1+y_1)^2- (y_1^2-\gamma^2)} dx_1 =  \frac{-1}{ \sqrt{y_1^2-\gamma^2}}
\log \frac{y_1-  \sqrt{y_1^2-\gamma^2}}{y_1 + \sqrt{y_1^2-\gamma^2}}. $$
This gives the claimed estimate.
\end{proof}

For $\alpha>0$ we now define special weight functions by
 \begin{equation} \label{special weight} {\bf w}_\lambda^\alpha(z) =
(2\pi)^{-n/2} \frac{1}{\Gamma(\alpha)} \left(1-|y|^2\right)_+^{\alpha -1} \,  \qquad (z=x+iy\in \Tc)\, .\end{equation}
As a consequence of Lemma \ref{deltabound} we obtain

\begin{cor} The weight ${\bf w}_\lambda^\alpha$
satisfies the integrability condition \eqref{request w} precisely for
$$\alpha>\max\{2s-1, 0\}\, .$$
\end{cor}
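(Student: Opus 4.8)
The plan is to feed the two-sided bounds of Lemma~\ref{deltabound} into the defining integral \eqref{request w} and reduce the question to the convergence of an elementary radial integral at the endpoint $|y|=1$. Since $\Lambda=\{y\in\R^n:|y|<1\}$ and ${\bf w}_\lambda^\alpha(x+iy)$ is a fixed positive multiple of $(1-|y|^2)_+^{\alpha-1}$ for $\alpha>0$, one has
$$d(\lambda)\asymp\int_{|y|<1}(1-|y|^2)^{\alpha-1}\,\|\delta_{\lambda,y}\|_1^2\,dy\, .$$
On any ball $\{|y|\le 1-\varepsilon\}$ the integrand is bounded above and below by positive constants (using $\|\delta_{\lambda,y}\|_1\ge 1$ on one side and Lemma~\ref{deltabound} on the other), so finiteness of $d(\lambda)$ is governed entirely by the behaviour of the integrand as $|y|\to 1$.

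Next I would pass to polar coordinates $y=r\omega$, $\omega\in\Sphere^{n-1}$, and use $1-|y|^2=(1-r)(1+r)\asymp 1-r$ near $r=1$, so that, up to a finite additive term, $d(\lambda)$ is comparable to $\int_{1/2}^1(1-r)^{\alpha-1}\,\|\delta_{\lambda,r\omega}\|_1^2\,dr$. Now insert the three alternatives of Lemma~\ref{deltabound}. For $0<s<\tfrac12$ the factor $\|\delta_{\lambda,y}\|_1^2$ is bounded between positive constants, so $d(\lambda)<\infty$ iff $\int_{1/2}^1(1-r)^{\alpha-1}\,dr<\infty$, i.e.\ iff $\alpha>0=\max\{2s-1,0\}$. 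For $s=\tfrac12$ one picks up a factor $|\log(1-|y|^2)|^2\asymp|\log(1-r)|^2$, and $\int_{1/2}^1(1-r)^{\alpha-1}|\log(1-r)|^2\,dr<\infty$ still holds precisely when $\alpha>0=\max\{2s-1,0\}$, since a squared logarithm does not move the convergence threshold of a power singularity with exponent strictly above $-1$. For $s>\tfrac12$ the factor is comparable to $(1-|y|^2)^{-2s+1}\asymp(1-r)^{-2s+1}$, so $d(\lambda)<\infty$ iff $\int_{1/2}^1(1-r)^{\alpha-2s}\,dr<\infty$, i.e.\ iff $\alpha-2s>-1$, which reads $\alpha>2s-1=\max\{2s-1,0\}$. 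Collecting the three cases proves the corollary.

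Because Lemma~\ref{deltabound} supplies genuine two-sided estimates, the above is an equivalence rather than a mere sufficient condition, which is exactly what the word ``precisely'' in the statement requires. I expect no substantive obstacle here: the entire analytic content is already packaged in Lemma~\ref{deltabound}, and the remaining steps are routine bookkeeping with radial integrals. The only spot deserving a moment's attention is the borderline case $s=\tfrac12$, where one must observe that the extra squared logarithmic factor is harmless at a power singularity of exponent strictly above $-1$.
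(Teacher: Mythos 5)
Your argument is correct and is exactly the computation the paper intends: the corollary is stated as an immediate consequence of Lemma~\ref{deltabound}, and your case analysis (inserting the two-sided bounds into \eqref{request w}, reducing to a radial integral near $|y|=1$, and noting that the squared logarithm in the borderline case $s=\tfrac12$ does not shift the threshold) is the routine verification the paper leaves to the reader. Nothing to add.
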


\begin{rmk} Observe that ${\bf w}_{\lambda}^\alpha(z)$ is a power of the Iwasawa projection ${\bf a} (y)$. It would be interesting to explore this further in higher rank, i.e.
whether one can find suitable weights which are of the form
$${\bf w}_\lambda(ny)=|{\bf a}(y)^\alpha|\qquad { (}ny \in \Tc { )}$$
for some $\alpha=\alpha(\lambda)\in \af^*$.

\end{rmk}

For later reference we also record the explicit expression
\begin{equation} {\bf w}_{\lambda,a}^\alpha(z) =
(2\pi)^{-n/2} \frac{1}{\Gamma(\alpha)} \left(1-\frac{|y|^2}{a^2}\right)_+^{\alpha -1}\end{equation}
for the rescaled weights.

In the next subsection we will show that the general integrability
condition for the weight function \eqref{request w} is sufficient, but not sharp. By a direct use of the Plancherel theorem for the Fourier transform on $ \R^n $ we will show that one can do better for $\mathbf{H}_n(\R)$.\\

\subsection{Isometric identities}
Let  $K_\lambda$ be the Macdonald Bessel function
and $I_{\alpha+n/2} $ be the Bessel function of first kind with $\alpha>0$.  For $s:=\re \lambda >0$, we define non-negative weight functions
\begin{equation}\label{weigh}
 w_\lambda^\alpha(\xi): =   |\xi|^{2s}  \left|K_{\lambda}( |\xi|)\right|^2 \frac{I_{\alpha+n/2-1}(2|\xi|)}{(2|\xi|)^{\alpha+n/2-1}}\qquad (\xi \in \R^n)\, .\end{equation}

\begin{theorem} \label{thm level isometry} Let $\alpha>0, \lambda\in \C$, and
$s=\re \lambda>0$. There exists an explicit constant $c_{n,\alpha,\lambda} >0$ such that for all $f \in L^2(\R^n)$ and $\phi_a=\Pc_\lambda f(\cdot, a)$  we have the identity
\begin{equation} \label{level isometry} \int_{\Tc_a} |\phi_a(z)|^2 {\bf w}_{\lambda,a}^\alpha(z)\, dz =c_{n,\alpha,\lambda} \, a^{-2s+2n} \int_{\R^n}  |\widehat{f}(\xi)|^2  \, w_{\lambda}^\alpha(a \xi)  \,  d\xi \qquad (a>0)\end{equation}
 where  $ {\bf w}_\lambda^\alpha$ is as in \eqref{special weight}.
 \end{theorem}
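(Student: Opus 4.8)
The strategy is to peel off the tube integral slice by slice, apply the Euclidean Plancherel theorem on each real slice $\R^n+iy$, and then carry out the remaining $y$--integral by Tonelli --- the point being that after Plancherel the double integrand is manifestly nonnegative, so no integrability hypothesis is needed and the identity follows directly, with both sides simultaneously finite or infinite. The two Bessel factors of the weight \eqref{weigh} then appear for structural reasons: $|\xi|^{2s}|K_\lambda(|\xi|)|^2$ as $|\widehat{p_\lambda(\cdot,1)}(\xi)|^2$, via the classical Fourier transform of $(1+|\cdot|^2)^{-\mu}$, and $I_{\alpha+n/2-1}(2|\xi|)/(2|\xi|)^{\alpha+n/2-1}$ from the Poisson--Sonine integral representation of the modified Bessel function $I_\nu$.

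\smallskip
In detail: writing $z=x+iy$ and $dz=dx\,dy$ on $\Tc_a=\{x+iy\mid|y|<a\}$ and using that ${\bf w}_{\lambda,a}^\alpha$ depends only on $y$, the left-hand side of \eqref{level isometry} is $\int_{|y|<a}{\bf w}_{\lambda,a}^\alpha(y)\bigl(\int_{\R^n}|\phi_a(x+iy)|^2\,dx\bigr)\,dy$. Since $\phi_a=f\ast p_\lambda(\cdot,a)$ and holomorphic continuation of a convolution continues the kernel, $\phi_a(\cdot+iy)=f\ast p_\lambda(\cdot+iy,a)$ on $\R^n$, which lies in $L^2(\R^n)$ for $|y|<a$ because $p_\lambda(\cdot+iy,a)\in L^1(\R^n)$ (indeed $a^2+(x+iy)^2=a^2-|y|^2+|x|^2+2i\langle x,y\rangle$ has no real zero and modulus $\asymp 1+|x|^2$). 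By Plancherel, the convolution theorem, and the contour shift $\widehat{p_\lambda(\cdot+iy,a)}(\xi)=e^{-y\cdot\xi}\,\widehat{p_\lambda(\cdot,a)}(\xi)$, one gets $\int_{\R^n}|\phi_a(x+iy)|^2\,dx=c_n\int_{\R^n}|\widehat f(\xi)|^2\,e^{-2y\cdot\xi}\,|\widehat{p_\lambda(\cdot,a)}(\xi)|^2\,d\xi$ for every $|y|<a$. Inserting this, the integrand of the resulting $(\xi,y)$--integral is nonnegative, so Tonelli yields $\int_{\Tc_a}|\phi_a|^2{\bf w}_{\lambda,a}^\alpha\,dz=c_n\int_{\R^n}|\widehat f(\xi)|^2\,|\widehat{p_\lambda(\cdot,a)}(\xi)|^2\bigl(\int_{|y|<a}{\bf w}_{\lambda,a}^\alpha(y)\,e^{-2y\cdot\xi}\,dy\bigr)d\xi$.

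\smallskip
It remains to evaluate the two factors. Substituting $y=a\eta$, the inner integral is $a^n(2\pi)^{-n/2}\Gamma(\alpha)^{-1}\int_{|\eta|<1}(1-|\eta|^2)^{\alpha-1}e^{-2a\,\eta\cdot\xi}\,d\eta$, and the Poisson--Sonine formula $\int_{|\eta|<1}(1-|\eta|^2)^{\alpha-1}e^{\eta\cdot t}\,d\eta=c_{n,\alpha}\,|t|^{-(\alpha+n/2-1)}I_{\alpha+n/2-1}(|t|)$ with $t=-2a\xi$ turns it into $c_{n,\alpha}\,a^n\,I_{\alpha+n/2-1}(2a|\xi|)/(2a|\xi|)^{\alpha+n/2-1}$, i.e.\ the second factor of $w_\lambda^\alpha(a\xi)$ times $a^n$. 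For the other factor one uses the dilation identity $p_\lambda(x,a)=a^{-\lambda-n/2}p_\lambda(x/a,1)$, whence $|\widehat{p_\lambda(\cdot,a)}(\xi)|^2=a^{n-2s}|\widehat{p_\lambda(\cdot,1)}(a\xi)|^2$, together with the classical formula $\int_{\R^n}(1+|x|^2)^{-(\lambda+n/2)}e^{-ix\cdot\xi}\,dx=c\,|\xi|^{\lambda}K_\lambda(|\xi|)$ (legitimate since $\re(\lambda+n/2)>n/2$; here ${\bf c}(\lambda)=\pi^{n/2}\Gamma(2\lambda)/\Gamma(\lambda+n/2)$ and $K_{-\lambda}=K_\lambda$), which gives $|\widehat{p_\lambda(\cdot,1)}(a\xi)|^2=c'\,|a\xi|^{2s}|K_\lambda(a|\xi|)|^2$, the first factor of $w_\lambda^\alpha(a\xi)$. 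Multiplying, the powers of $a$ combine to $a^{n-2s}\cdot a^n=a^{-2s+2n}$, the $\xi$--integrand becomes $|\widehat f(\xi)|^2\,w_\lambda^\alpha(a\xi)$ up to a single numerical constant, and absorbing that constant into $c_{n,\alpha,\lambda}$ gives \eqref{level isometry}.

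\smallskip
The main obstacle is the justification of the contour shift $\widehat{p_\lambda(\cdot+iy,a)}(\xi)=e^{-y\cdot\xi}\widehat{p_\lambda(\cdot,a)}(\xi)$: one argues by Cauchy's theorem for the holomorphic function $z\mapsto p_\lambda(z,a)$ on the tube $\{|\operatorname{Im}z|<a\}$, using the uniform decay $|p_\lambda(x+ity,a)|\lesssim(1+|x|^2)^{-(s+n/2)}$ for $0\le t\le1$ to discard the lateral contributions, which is exactly where $|y|<a$ (so that the whole segment $[0,y]$ stays inside the strip of holomorphy) is used. Everything else is routine once this is in place: Tonelli needs no estimate because the integrand is $\ge0$, and the two Bessel identities are standard (the second being the one that actually produces the $I$--factor in \eqref{weigh}); the only remaining bookkeeping is to track the constants in those two identities, together with $\pi^{n/2}$, $\Gamma(\alpha)$, $(2\pi)^{-n/2}$ and ${\bf c}(\lambda)$, precisely enough to exhibit $c_{n,\alpha,\lambda}$ explicitly.
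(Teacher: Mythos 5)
Your proposal is correct and follows essentially the same route as the paper: slice the tube integral, apply Plancherel on each real slice $\R^n+iy$ with the contour shift $e^{-y\cdot\xi}$, interchange by Tonelli, and then identify the $y$--integral of the weight with the $I_{\alpha+n/2-1}$ factor and the Fourier transform of the Poisson kernel with the $|\xi|^{2s}|K_\lambda(|\xi|)|^2$ factor. The only cosmetic differences are that the paper obtains $K_\lambda$ from the subordination (heat--kernel) integral representation of $(a^2+|x|^2)^{-(\lambda+n/2)}$ rather than by citing the classical Fourier transform formula, and obtains $I_{\alpha+n/2-1}$ by analytic continuation of the $J$--Bessel Fourier transform of $(1-|y|^2)^{\alpha-1}_+$ rather than via the Poisson--Sonine formula; these are the same identities, and your bookkeeping of the powers of $a$ matches the paper's.
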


\begin{proof} Let us set
$$ \varphi_{\lambda,a}(x) =  \pi^{-n/2}  \frac{\Gamma(\lambda+n/2)}{\Gamma(2\lambda)} (a^2+|x|^2)^{-(\lambda+n/2)} $$ so that we can write $ \phi_a(z) = \mathcal{P}_\lambda f(z,a) = a^{\lambda+n/2} f \ast \varphi_{\lambda,a}(z).$ In view of the Plancherel theorem  for the Fourier transform we have
$$ \int_{\R^n} |\phi_a(x+iy)|^2 dx = a^{2s+n} \int_{\R^n} e^{-2 y \cdot \xi} |\widehat{f}(\xi)|^2  |\widehat{\varphi}_{\lambda,a}(\xi)|^2 \, d\xi\, . $$
Integrating both sides of the above against the weight function ${\bf w}_{\lambda,a}^\alpha(z)$ we obtain the identity
\begin{equation} \label{main id} \int_{\Tc_a} |\phi_a(z)|^2 {\bf w}_{\lambda,a}^\alpha(z)dz = a^{2 s+n} \int_{\R^n}  |\widehat{f}(\xi)|^2  \, v_a^\alpha(\xi)  \, |\widehat{\varphi}_{\lambda,a}(\xi)|^2 \, d\xi\end{equation}
where $ v_a^\alpha(\xi) $ is the function defined by
$$ v_a^\alpha(\xi) = (2\pi)^{-n/2} \, \frac{1}{\Gamma(\alpha)} \,  \int_{|y| < a} e^{-2 y \cdot \xi}\, \left(1-\frac{|y|^2}{a^2}\right)_+^{\alpha-1}\ dy.$$
Both functions $ v_a^\alpha(\xi) $ and $\widehat{\varphi}_{\lambda,a}(\xi)$ can be evaluated explicitly in terms of Bessel and Macdonald functions. We begin with
$v_a^\alpha$ and recall that the Fourier transform of $(1-|y|^2)^{\alpha-1}_+$ is  explicitly known in terms of $J$-Bessel functions { (see \cite[Ch.~II, \S2.5]{GS})}:
$$
(2\pi)^{-n/2} \int_{\R^n} (1-|y|^2)^{\alpha-1}_+ e^{-i y\cdot \xi} dy =  \Gamma(\alpha) 2^{\alpha-1} |\xi|^{-\alpha-n/2+1}J_{\alpha+n/2-1}(|\xi|).
$$
 As the $J$-Bessel functions analytically extend to the imaginary axis, it follows that
 \begin{equation}
\label{FTweight}
(2\pi)^{-n/2} \, a^{-n}\,  \int_{\R^n} \left( 1-\frac{|y|^2}{a^2} \right)_+^{\alpha-1} e^{-2y\cdot \xi} dy = \Gamma(\alpha) 2^{\alpha-1}  \, (2a |\xi|)^{-\alpha-n/2+1} I_{\alpha+n/2-1}(2 a |\xi|)
\end{equation}
where $ I_{\alpha+n/2-1}$ is the modified Bessel function of first kind. We arrive at
\begin{equation} \label{vsa}
 v_a^\alpha(\xi)=2^{\alpha-1}  a^n (2a |\xi|)^{-\alpha-n/2+1} I_{\alpha+n/2-1}(2 a |\xi|)\, .\end{equation}

\par Moving on to $\widehat{\varphi}_{\lambda,a}(\xi)$ we use the integral representation
$$ \varphi_{\lambda,a}(x) =  \frac{(4 \pi)^{-n/2} 2^{-2\lambda}}{\Gamma(2\lambda)} \int_0^\infty e^{-\frac{1}{4t}(a^2+|x|^2)} t^{-n/2-\lambda-1} \, dt $$
and calculate the Fourier transform as
$$ \widehat{\varphi}_{\lambda,a}(\xi) = \frac{(2 \pi)^{-n/2} 2^{-2\lambda}}{\Gamma(2\lambda)} \int_0^\infty e^{-\frac{1}{4t}a^2}  \, e^{-t|\xi|^2} \,t^{-\lambda-1} \, dt\, . $$
The Macdonald function of type $ \nu $ is given by the integral representation
$$  r^\nu K_\nu(r) = 2^{\nu-1}   \int_0^\infty  e^{-t-\frac{r^2}{4t}} t^{\nu-1} dt,$$
for any $ r >0.$ In terms of  this function we have
\begin{equation} \label{phiK} \widehat{\varphi}_{\lambda,a}(\xi) = \frac{(2 \pi)^{-n/2} 2^{1-\lambda}}{\Gamma(2\lambda)} a^{-2\lambda} (a|\xi|)^\lambda K_\lambda(a|\xi|)\, .\end{equation}
Using these explicit formulas we obtain from \eqref{main id} that
$$ \int_{\Tc_a} |\phi_a(z)|^2 {\bf w}_{\lambda,a}^\alpha(z)dz =c_{n,\alpha,\lambda}\, a^{-2s+2n} \int_{\R^n}  |\widehat{f}(\xi)|^2  \, w_{\lambda}^\alpha(a \xi)  \,  d\xi$$
for an explicit constant $c_{n,\alpha,\lambda} $ and
\begin{equation} \notag w_\lambda^\alpha(\xi)=   |\xi|^{2s}  \left|K_{\lambda}( |\xi|)\right|^2 \frac{I_{\alpha+n/2-1}(2|\xi|)}{(2|\xi|)^{\alpha+n/2-1}},
\end{equation}
by \eqref{vsa} and \eqref{phiK}.
\end{proof}

In the sequel we write $\B_\alpha(\Xi_S, \lambda)$ to indicate the dependence on $\alpha>0$.
Now we are ready to state the main theorem of this section:

\begin{theorem}\label{thm hyp} For $\alpha>\max\{2s-\frac{n+1}{2}, 0\}$ the Poisson transform establishes
an isomorphism of Banach spaces
$$\Pc_\lambda: L^2(N)\to \B_\alpha(\Xi_S, \lambda)\,.$$
If moreover $\lambda=s$ is real, then $\Pc_\lambda$ is an isometry up to positive scalar.
\end{theorem}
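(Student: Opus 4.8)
The plan is to extract the theorem from the explicit identity of Theorem~\ref{thm level isometry}: for $\phi=\Pc_\lambda f$ with $f\in L^2(N)$,
\[
a^{2s-2n}\,\|\phi_a\|_{\B_{a,\lambda}}^2 \;=\; c_{n,\alpha,\lambda}\int_{\R^n}|\widehat f(\xi)|^2\,w_\lambda^\alpha(a\xi)\,d\xi ,
\]
whence $\|\Pc_\lambda f\|^2 = c_{n,\alpha,\lambda}\,\sup_{a>0}\int_{\R^n}|\widehat f(\xi)|^2\,w_\lambda^\alpha(a\xi)\,d\xi$ with $w_\lambda^\alpha$ as in \eqref{weigh}. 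In this way the whole statement is reduced to soft properties of the single radial weight $w_\lambda^\alpha$.

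For the Banach space isomorphism I would first record, using the standard asymptotics of the Macdonald and modified Bessel functions, that on the range $\alpha>\max\{2s-\tfrac{n+1}{2},0\}$ the function $w_\lambda^\alpha$ is continuous on $\R^n$, finite and strictly positive at the origin, and bounded: writing $\mu=\alpha+\tfrac{n}{2}-1$, the behaviour $|K_\lambda(r)|^2\sim c\,r^{-2s}$ and $I_{\mu}(2r)/(2r)^{\mu}\to c'$ as $r\to0^+$ gives $w_\lambda^\alpha(0)\in(0,\infty)$, while $|K_\lambda(r)|^2\sim\tfrac{\pi}{2r}e^{-2r}$ and $I_\mu(2r)\sim(4\pi r)^{-1/2}e^{2r}$ give $w_\lambda^\alpha(\xi)\sim C|\xi|^{2s-\frac{n+1}{2}-\alpha}$ as $|\xi|\to\infty$, which is bounded (in fact decaying) exactly because $\alpha>2s-\tfrac{n+1}{2}$. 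Boundedness yields $\|\Pc_\lambda f\|^2\le c_{n,\alpha,\lambda}\|w_\lambda^\alpha\|_\infty\|f\|_{L^2(N)}^2$, and letting $a\to0$ with dominated convergence (using continuity of $w_\lambda^\alpha$ at $0$) yields $\|\Pc_\lambda f\|^2\ge c_{n,\alpha,\lambda}\,w_\lambda^\alpha(0)\,\|f\|_{L^2(N)}^2$; since $\re\lambda>0$ forces injectivity, $\Pc_\lambda$ is a topological embedding of $L^2(N)$ into $\B_\alpha(\Xi_S,\lambda)$. For surjectivity I would run the argument of the proof of Theorem~\ref{main theorem} verbatim: as ${\bf w}_\lambda^\alpha$ is continuous and positive it is bounded above and below by positive constants on compact subsets of $\exp(i\Lambda)$, so the Bergman inequality $\|\psi|_N\|_{L^2(N)}\le C a^{-\rho}\|\psi\|_{\B_{a,\lambda}}$ is available; hence for $\phi\in\B_\alpha(\Xi_S,\lambda)$ the family $a^{\lambda-\rho}\phi_a|_N$ is bounded in $L^2(N)$, admits a weak limit $h$ along a ray in $A^-$, and by the Helgason conjecture $\phi=\Pc_\lambda f$ with ${\bf c}(\lambda)^{-1}a^{\lambda-\rho}\phi_a|_N\to f$, forcing $f=h/{\bf c}(\lambda)\in L^2(N)$. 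Combined with the two-sided bound this proves the isomorphism (and shows $\B_\alpha(\Xi_S,\lambda)$ is complete, being linearly homeomorphic to $L^2(N)$).

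For the isometry statement with $\lambda=s>0$ real, everything comes down to the claim that $w_s^\alpha$ attains its maximum at the origin, i.e. $w_s^\alpha(\xi)\le w_s^\alpha(0)$ for all $\xi$. Granting it, $\int_{\R^n}|\widehat f|^2 w_s^\alpha(a\xi)\,d\xi\le w_s^\alpha(0)\|f\|_{L^2(N)}^2$ for every $a$, while the $a\to0$ limit equals $w_s^\alpha(0)\|f\|_{L^2(N)}^2$, so the supremum is exactly $w_s^\alpha(0)\|f\|_{L^2(N)}^2$ and $\|\Pc_\lambda f\|=\sqrt{c_{n,\alpha,\lambda}\,w_s^\alpha(0)}\;\|f\|_{L^2(N)}$; a routine bookkeeping of the constants identifies this scalar with $w(\lambda)|{\bf c}(\lambda)|$ and hence gives the norm--sup identity \eqref{norm sup}. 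To prove maximality at $0$ I would show $\frac{d}{dr}\log w_s^\alpha<0$ on $(0,\infty)$, where $r=|\xi|$: the recurrence relations for $I_\nu$ and $K_\nu$ give
\[
\frac{d}{dr}\log w_s^\alpha(\xi)=2\Big(\frac{I_{\mu+1}(2r)}{I_\mu(2r)}-\frac{K_{s-1}(r)}{K_s(r)}\Big),
\]
so it suffices to prove $\dfrac{I_{\mu+1}(2r)}{I_\mu(2r)}<\dfrac{K_{s-1}(r)}{K_s(r)}$ for all $r>0$ whenever $\mu>2s-\tfrac32$ (equivalently $\alpha>2s-\tfrac{n+1}{2}$). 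For $s\le\tfrac12$ this is immediate, since then $K_{s-1}(r)\ge K_s(r)$, so the right side is $\ge1$ and the left side is $<1$. For $s>\tfrac12$ I would use the Riccati equations $P'=1-\tfrac{2\mu+1}{x}P-P^2$ and $Q'=Q^2-1+\tfrac{2s-1}{r}Q$ satisfied by $P(x)=I_{\mu+1}(x)/I_\mu(x)$ and $Q(r)=K_{s-1}(r)/K_s(r)$: let $\gamma(x)$ and $\beta(r)$ be the positive roots of $y^2+\tfrac{2\mu+1}{x}y-1=0$ and $y^2+\tfrac{2s-1}{r}y-1=0$; a barrier comparison based on the behaviour of $P,Q$ and of $\gamma,\beta$ near $0$ and near $\infty$ gives $P(x)<\gamma(x)$ and $Q(r)>\beta(r)$; and since the positive root of $y^2+cy-1=0$ is decreasing in $c\ge0$ while $\tfrac{2\mu+1}{2r}\ge\tfrac{2s-1}{r}$ (this is precisely $\mu\ge2s-\tfrac32$), one obtains $\gamma(2r)\le\beta(r)$ and hence $P(2r)<Q(r)$.

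I expect the Bessel-ratio inequality $\frac{I_{\mu+1}(2r)}{I_\mu(2r)}<\frac{K_{s-1}(r)}{K_s(r)}$ to be the crux: it is the single place where the sharp exponent $\alpha>2s-\tfrac{n+1}{2}$ enters in an essential way, and the barrier comparison must be carried out with attention to the parameter ranges (in particular to the sign of $2s-1$ and to the different near-origin behaviour of $K_{s-1}/K_s$ according to whether $s$ is less than, equal to, or greater than $1$). The asymptotic analysis of $w_\lambda^\alpha$ underlying the isomorphism, and the transplantation of the surjectivity argument from Theorem~\ref{main theorem}, are by comparison routine.
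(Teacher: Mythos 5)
Your proposal is correct and follows essentially the same route as the paper: the level isometry of Theorem~\ref{thm level isometry} reduces everything to the radial weight $w_\lambda^\alpha$, whose boundedness (read off from the Bessel asymptotics, with decay exponent $2s-\alpha-\tfrac{n+1}{2}$) gives continuity, the Helgason-conjecture argument of Theorem~\ref{main theorem} gives surjectivity, and the isometry for real $\lambda=s$ reduces to monotonicity of $w_s^\alpha$, i.e.\ to the ratio inequality $I_{\mu+1}(2r)/I_\mu(2r)<K_{s-1}(r)/K_s(r)$ exactly as in Proposition~\ref{prop monotone}. The only divergence is at that last crux: the paper obtains the inequality by citing Segura's bounds \cite[Theorem 1.1]{JS} with $\nu=2s-1$ and $\beta=\alpha+(n-1)/2>\nu$, whereas you re-derive those bounds via the Riccati/barrier comparison (in effect reproving Segura's inequality); your separate elementary treatment of $s\le\tfrac12$ is a sensible addition, since the cited form of Segura's result requires $\nu=2s-1\ge0$.
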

\begin{proof} The behaviour of $ w_s^\alpha(\xi) $ can be read out from the  well known asymptotic properties of the functions $ K_\nu $ and $ I_\alpha.$ Indeed we can show (see  Proposition \ref{prop monotone} below) that there exists $c_1, c_2>0$ such that
$$ c_1 \, (1+|\xi|)^{-(\alpha+ \frac{n+1}{2}-2s)} \leq w_\lambda^\alpha(\xi) \leq c_2 \, (1+|\xi|)^{-(\alpha+\frac{n+1}{2}-2s)} \qquad (\xi \in \R^n)\, .$$
When $ 2s > \frac{n+1}{2},$ we can choose $ \alpha = (2s - \frac{n+1}{2}) >0 $ in
Theorem \ref{thm level isometry} above. With this choice, note that $ w_\lambda^\alpha(\xi) \leq c_3 $ and consequently we have
$$ \int_{\Tc_a} |\phi_a(z)|^2 {\bf w}_{\lambda,a}^\alpha(z)dz  \leq c_{n,\alpha,\lambda}\, a^{-2s+2n} \int_{\R^n}  |\widehat{f}(x)|^2    \,  dx .$$
This inequality implies that $\Pc_\lambda$ is defined. The surjectivity follows as in the proof of Theorem \ref{maintheorem} and with that the first assertion is established.   The second part is a consequence of Theorem
\ref{thm level isometry} and the stated monotonicity in Proposition \ref{prop monotone}.
\end{proof}

\begin{proposition}\label{prop monotone}
Let $\lambda = s>0$ be real and $\alpha>\max\{2s-\frac{n+1}{2}, 0\}$. Then $w_s^\alpha(r)$, $r >0 $, is positive and monotonically  decreasing. Moreover, $w_s^\alpha(0) =2^{-\alpha-\frac{n}{2}-2s-1} \frac{\Gamma(s)^2}{\Gamma(\alpha+\frac{n}{2})} $ and $w_s^\alpha(r) \sim c_\alpha \,  r^{-(\alpha+\frac{n+1}{2}-2s)}$ for $ r \to \infty$.
\end{proposition}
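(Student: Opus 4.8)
The plan is to work throughout with the explicit one‑variable function $w_s^\alpha$ of \eqref{weigh}. For the real parameter $\lambda=s>0$ the Macdonald function $K_s$ is real on $(0,\infty)$, so the modulus disappears and one has
$$w_s^\alpha(r)=r^{2s}K_s(r)^2\,\frac{I_\nu(2r)}{(2r)^\nu},\qquad \nu:=\alpha+\tfrac n2-1>-\tfrac12 .$$
\textbf{Positivity} is then immediate, since $K_s(r)>0$ for $r>0$ and $(2r)^{-\nu}I_\nu(2r)=\sum_{k\ge0}r^{2k}\big/\big(2^{\nu+2k}k!\,\Gamma(\nu+k+1)\big)$ has all coefficients positive ($\nu>-1$ suffices). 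The \textbf{value at $0$} and the \textbf{behaviour at $\infty$} are obtained by inserting the classical expansions $K_s(r)\sim\tfrac12\Gamma(s)(r/2)^{-s}$, $I_\nu(x)\sim(x/2)^\nu/\Gamma(\nu+1)$ as the argument $\to0$, and $K_s(r)\sim\sqrt{\pi/(2r)}\,e^{-r}$, $I_\nu(x)\sim e^x/\sqrt{2\pi x}$ as it $\to\infty$; this fixes $w_s^\alpha(0)$ and gives $w_s^\alpha(r)\sim c_\alpha r^{-(\alpha+\frac{n+1}2-2s)}$. (Together with continuity and positivity this also yields the two‑sided bound $c_1(1+r)^{-(\alpha+\frac{n+1}2-2s)}\le w_s^\alpha(r)\le c_2(1+r)^{-(\alpha+\frac{n+1}2-2s)}$ used in the proof of Theorem~\ref{thm hyp}.)

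The substance of the proposition is the \textbf{monotone decrease}, which I would obtain by logarithmic differentiation. Using the standard relations $\frac{d}{dr}\big(r^sK_s(r)\big)=-r^sK_{s-1}(r)$ and $\frac{d}{dr}\big((2r)^{-\nu}I_\nu(2r)\big)=2(2r)^{-\nu}I_{\nu+1}(2r)$ and writing $w_s^\alpha=(r^sK_s(r))^2\cdot(2r)^{-\nu}I_\nu(2r)$, one gets
$$\frac{(w_s^\alpha)'(r)}{w_s^\alpha(r)}=2\left(\frac{I_{\nu+1}(2r)}{I_\nu(2r)}-\frac{K_{s-1}(r)}{K_s(r)}\right),$$
so it remains to prove the Bessel‑ratio inequality $\dfrac{K_{s-1}(r)}{K_s(r)}>\dfrac{I_{\nu+1}(2r)}{I_\nu(2r)}$ for all $r>0$. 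For $s\le\tfrac12$ this is trivial: $\frac{K_{s-1}(r)}{K_s(r)}=\frac{K_{1-s}(r)}{K_s(r)}\ge1$, because $1-s\ge s\ge0$ and $K_\mu(r)$ is increasing in $\mu\ge0$, while $I_{\nu+1}(2r)<I_\nu(2r)$. For $s>\tfrac12$ I would compare the two ratios with the equilibria of their Riccati equations: $P(r):=\frac{K_{s-1}(r)}{K_s(r)}$ satisfies $P'=P^2+\frac{2s-1}r P-1=(P-P_+)(P-P_-)$ with $P_+(r)=\frac{2r}{(2s-1)+\sqrt{(2s-1)^2+4r^2}}\in(0,1)$, $P_-<0$, and $Q(r):=\frac{I_{\nu+1}(2r)}{I_\nu(2r)}$ satisfies $Q'=-2Q^2-\frac{2\nu+1}rQ+2=-2(Q-Q_+)(Q-Q_-)$ with $Q_+(r)=\frac{4r}{(2\nu+1)+\sqrt{(2\nu+1)^2+16r^2}}$, $Q_-<0$. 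A comparison of leading terms at $r=0$ gives $P>P_+$ and $Q<Q_+$ near the origin; since $P_+$ and $Q_+$ are increasing with limit $1$, $P_+<1=\lim_{r\to\infty}P(r)$, and $Q<1=\lim_{r\to\infty}Q(r)$, short ``no‑crossing'' arguments built on the Riccati identities propagate these to all $r>0$. Finally $P_+(r)\ge Q_+(r)$ for every $r\ge0$ — clearing denominators this is the elementary inequality $(2\nu+1)+\sqrt{(2\nu+1)^2+16r^2}\ge(4s-2)+\sqrt{(4s-2)^2+16r^2}$, equivalently (since $4s-2>0$ here) $2\nu+1\ge4s-2$ — and this is exactly the hypothesis $\alpha>2s-\frac{n+1}2$, i.e. $2\nu+3>4s$. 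Chaining, $P\ge P_+\ge Q_+>Q$, which is the desired inequality.

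I expect the main technical work to lie in the case $s>\tfrac12$. The Riccati equations themselves are routine, and the passage $Q<Q_+$ is a one‑line crossing argument, but propagating $P>P_+$ to all $r$ genuinely needs the combination $\lim_{r\to\infty}P=1$, $P_+<1$, $P_+$ increasing (a crossing of $P$ below $P_+$ cannot be undone), and one must handle the sub‑cases $\tfrac12<s<1$, $s=1$, $s>1$ separately when matching the small‑$r$ asymptotics of $P$ to those of $P_+$. The threshold is sharp: as $r\to\infty$ one finds $\frac{K_{s-1}(r)}{K_s(r)}-\frac{I_{\nu+1}(2r)}{I_\nu(2r)}=\frac{\alpha+\frac{n+1}2-2s}{2r}+o(r^{-1})$, whose leading coefficient is positive exactly for $\alpha>2s-\frac{n+1}2$ and vanishes at the endpoint $\alpha=2s-\frac{n+1}2$. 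The proof then closes by recording the explicit value $w_s^\alpha(0)$ and the equivalent $c_\alpha r^{-(\alpha+\frac{n+1}2-2s)}$ from the first paragraph, which furnish the last two assertions.
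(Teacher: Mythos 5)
Your argument is correct and its skeleton coincides with the paper's: the same differentiation identities for $r^sK_s(r)$ and $x^{-\nu}I_\nu(x)$ reduce monotone decrease to the ratio inequality $\frac{I_{\alpha+n/2}(2r)}{I_{\alpha+n/2-1}(2r)}<\frac{K_{s-1}(r)}{K_s(r)}$, and the values at $0$ and $\infty$ come from the same limiting forms. The genuine difference is how that ratio inequality is settled. The paper quotes Segura's bounds \cite[Theorem 1.1]{JS}, namely $\frac{I_{\mu+1/2}(x)}{I_{\mu-1/2}(x)}<\frac{x}{\mu+\sqrt{\mu^2+x^2}}\le\frac{K_{\mu-1/2}(x)}{K_{\mu+1/2}(x)}$, applied at $\mu=\alpha+(n-1)/2$ and $\mu=2s-1$, and the comparison of the two middle terms is exactly your inequality $P_+\ge Q_+$, i.e.\ $2\nu+1\ge 4s-2$. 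Your $P_+$ and $Q_+$ are precisely Segura's comparison functions, so your Riccati analysis amounts to re-proving his theorem; the no-crossing arguments do close (once $P<P_+$ somewhere, $P$ decreases while $P_+$ increases toward $1$, contradicting $P\to1$ at infinity), at the cost of checking the small-$r$ asymptotics in the sub-cases $\frac{1}{2}<s<1$, $s=1$, $s>1$ as you indicate. What this buys is a self-contained proof, a sharpness statement for the threshold $\alpha=2s-\frac{n+1}{2}$, and --- more substantively --- a clean treatment of $0<s\le\frac{1}{2}$, where the paper invokes the quoted theorem at the order $2s-1<0$ although it is stated only for nonnegative order; your elementary observation $K_{s-1}/K_s=K_{1-s}/K_s\ge 1>I_{\nu+1}/I_\nu$ is the right fix there. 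One caveat on the endpoint value you leave implicit: carrying out the expansions actually gives $w_s^\alpha(0)=2^{\,2s-1-\alpha-\frac{n}{2}}\,\Gamma(s)^2/\Gamma(\alpha+\tfrac{n}{2})$, which differs from the exponent printed in the proposition, so this constant should be reconciled rather than asserted.
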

\begin{proof} From the definition, we have
$$ w_s^\alpha(r)=   r^{2s}  K_s(r)^2 \frac{I_{\alpha+n/2-1}(2r)}{(2r)^{\alpha+n/2-1}}.$$
Evaluating $ r^sK_s(r)$ and $ \frac{I_{\alpha+n/2-1}(2r)}{(2r)^{\alpha+n/2-1}}$ at $ r =0 $ by making use of the limiting forms close to zero (see  \cite[10.30]{OlMax})  we obtain $ w_s^\alpha(0) = 2^{-\alpha-\frac{n}{2}-2s-1} \frac{\Gamma(s)^2}{\Gamma(\alpha+\frac{n}{2})}$ as claimed in the proposition. The well known asymptotic properties of $ K_s(r) $ and $ I_\beta(r) $, see \cite[10.40]{OlMax}, proves the other claim. It therefore remains to show that $ w_s^\alpha(r) $ is monotonically decreasing, which will follow once we show that the derivative of $ w_s^\alpha(r)$ is negative at any $ r >0.$

Making use of the well known relations
$$
\frac{d}{dr}\big( r^s K_s(r)\big) = -r^s K_{s-1}(r),\qquad \frac{d}{dr}\Big(\frac{I_\beta(r)}{r^\beta}\Big) = \frac{I_{\beta+1}(r)}{r^\beta}$$
a simple calculation shows that
$$  \frac{d}{dr}\big( w_s^\alpha(r)\big) = 2 r^{2s} K_s(r) (2r)^{-(\alpha+n/2-1)}  F(r) $$
where $ F(r) = \Big( K_s(r) I_{\alpha+n/2}(2r) - K_{s-1}(r) I_{\alpha+n/2-1}(2r) \Big).$ Thus we only need to check that $ F(r) < 0 $ or equivalently
$$ \frac{I_{\alpha+n/2}(2r)}{I_{\alpha+n/2-1}(2r)} < \frac{K_{s-1}(r)}{K_s(r)}.$$
In order to verify this we make use of the following inequality proved by J.~Segura \cite[Theorem 1.1]{JS}: for any $ \nu\ge 0$ and  $r > 0 $ one has
$$ \frac{I_{\nu+1/2}(r)}{I_{\nu-1/2}(r)} <  \frac{r}{\nu+\sqrt{\nu^2+r^2}} \leq \frac{K_{\nu-1/2}(r)}{K_{\nu+1/2}(r)}.$$
Replacing $ r $ by $ 2r $ in the first inequality, from the above we deduce that
$$ \frac{I_{\nu+1/2}(2r)}{I_{\nu-1/2}(2r)} <  \frac{r}{\nu/2+\sqrt{\nu^2/4+r^2}} \leq \frac{K_{(\nu-1)/2}(r)}{K_{(\nu+1)/2}(r)}.$$

In the above we choose  $ \nu= 2s -1 $ so that $ (\nu-1)/2 = s-1 $ and $ (\nu+1)/2 = s.$ With $ \beta = \alpha+(n-1)/2 $ we have
$$ \frac{I_{\beta+1/2}(2r)}{I_{\beta-1/2}(2r)}  <  \frac{r}{\beta/2+\sqrt{\beta^2/4+r^2}}  <  \frac{r}{\nu/2+\sqrt{\nu^2/4+r^2}}\leq \frac{K_{(\nu-1)/2}(r)}{K_{(\nu+1)/2}(r)}$$
provided $ \beta > \nu$, i.e. $ \alpha+(n-1)/2 > 2s-1$ which is precisely the condition
on  $\alpha$ in the statement of the proposition.
\end{proof}

\section{Remarks on the extension problem}
We recall that given a function $f \in L^2(N)$ the Poisson transform $\phi =\Pc_\lambda(f)$  for $\re \lambda>0$, viewed as a function on $S$, gives us a solution
$$\Delta_S \phi ={ ( \lambda^2 -\rho^2)} \phi$$
such that we can retrieve $f$ through the boundary value map
$$b_\lambda(f)(n) = {\bf c}(\lambda)^{-1}\lim_{a\to \infty\atop a\in A^-}  a^{\lambda -\rho} \phi(na)\, .$$
We normalize $\Pc_\lambda$ in the sequel by ${1\over {\bf c}(\lambda)} \Pc_\lambda$ and replace $\phi$ by $\psi(na) = a^{\lambda -\rho} \phi(na)$. Hence it is natural
to ask about the differential equation which $\psi$ satisfies. To begin with we first derive a formula for $\Delta_S$ in $N\times A$-coordiantes. Note that $\Delta_Z$ on $Z=G/K$ descends from the Casimir operator $\Cc$
on right $K$-invariant functions on $G$ and so we start with $\Cc$. We assume that $G$ is semisimple and let $\kappa:\gf\times \gf\to \R$ be the Cartan--Killing form.
For an orthonormal basis $H_1, \ldots, H_n$ of $\af$ with respect to $\kappa|_{\af \times \af} $ we form the operator $\Delta_A= \sum_{j=1}^n H_j^2$ viewed
as a left $G$-invariant differential operator on $G$. Likewise we define $\Delta_M$ with respect to an orthonornal basis of $\mf$ with respect to
$-\kappa|_{\mf \times \mf}$. Now for each root space $\gf^\alpha$, $\alpha\in \Sigma^+$  we choose a basis $E_\alpha^j$, $1\leq j \leq m_\alpha=\dim \gf^\alpha$, such that with
$F_\alpha^j:=-\theta(E_\alpha^j)\in \gf^{-\alpha}$ we have $\kappa(E_\alpha^j, F_\alpha^k)=\delta_{jk}$.
Having said all that we obtain

$$\Cc=\Delta_A -\Delta_M+ \sum_{\alpha>0}\sum_{j=1}^{m_\alpha} E_\alpha^j F_\alpha^j + F_\alpha^j E_\alpha^j\, .$$
Now $\kappa|_{\af \times \af}$ complex linearly extended identifies $\af_\C$ with $\af_\C^*$ and for $\lambda\in \af_\C^*$ we let $H_\lambda\in \af_\C$ such that
$\lambda = \kappa(\cdot, H_\lambda)$.  We further recall that $[E_\alpha^j, F_\alpha^j]=H_\alpha$.
Thus we can rewrite $\Cc$ as
$$\Cc=\Delta_A -H_{2\rho} + \Delta_M+ \sum_{\alpha>0}\sum_{j=1}^{m_\alpha} 2E_\alpha^j F_\alpha^j \, .$$
Now note that $2E_\alpha^j F_\alpha^j = 2E_\alpha^j E_\alpha^j+ 2E_\alpha^j( F_\alpha^j - E_\alpha^j)$ with
$F_\alpha^j -E_\alpha^j\in \kf$.  Therefore $\Cc$ descends on right $K$-invariant smooth functions on $G$ to the operator

$$\Delta_Z = \Delta_A -H_{2\rho} + \sum_{\alpha>0}\sum_{j=1}^{m_\alpha} 2E_\alpha^j E_\alpha^j \, .$$
Now if we identify $Z=G/K$ with $N\times A$ then we see that that a point $(n,a)$ the operator $\Delta_S$ is given in the separated form
$$\Delta_S = \Delta_A -H_{2\rho} + \underbrace{\sum_{\alpha>0}\sum_{j=1}^{m_\alpha} 2a^{2\alpha}E_\alpha^j E_\alpha^j}_{=: \Delta_N^a}$$
with $\Delta_N^a$ acting on the right of $N$.  Hence
$$\Delta_S = \Delta_A -H_{2\rho} +\Delta_N^a\, .$$
Let now $\phi = \phi(n,a)$ be an eigenfunction of $\Delta_S$ to parameter $\lambda\in \af_\C^*$, say
$$\Delta_S \phi ={ ( \lambda^2 - \rho^2)} \phi\, .$$
We think of $\phi=\Pc_\lambda(f)$ for some generalized function $f$ on $N$ so that we can retrieve $f$ from $\phi$ via the boundary value map
\eqref{boundary}.  As motivated above we replace $\phi$ by $\psi(n,a)= a^{\lambda -\rho} \phi(n,a)$ and see what differential
equation $\psi$ satisfies. Since $\phi = a^{\rho-\lambda} \psi$ we have
$$(\Delta_A -H_{2\rho})a^{\rho-\lambda} \psi = a^{\rho -\lambda} ({ \lambda^2 -\rho^2} + \Delta_A - H_{2\rho} + \underbrace{{ 2} H_{\rho -\lambda}}_{ =H_{2\rho} - H_{2\lambda}})\psi\, .$$
This means that $\psi$ satisfies the differential equation
\begin{equation} \label{ext1}(\Delta_A - H_{ 2 \lambda} + \Delta_N^a)\psi(n,a)=0\, .\end{equation}
If we assume now that $\re \lambda>0$ and $\phi=\Pc_\lambda f$, then we have
\begin{equation} \label{ext2}\lim_{a\to \infty\atop a\in A^-} \psi(n,a)=  f(n)\end{equation}
The pair of equations \eqref{ext1} and \eqref{ext2} we refer to is the extension problem with parameter $\lambda$ for the operator $\Delta_S$. Given $f$ it has a unique solution provided
$\Delta_S$ generates all of $\mathbb{D}(Z)$, that is, the real rank of $G$ is one.

It is instructive to see what it is classically for the real hyperbolic spaces with $N=\R^n$. Here we use the classical (i.e. normalized Poisson transform)
and the extension problem becomes, for $(x,t)\in \R^n \times \R_{>0}$ (we use the notation from the previous section),
\begin{equation} \label{ext1a}
\big(\partial_t^2 + \frac{1-\lambda/2}{t} \partial_t+ \Delta_{\R^n}\big)\psi(x,t)=0\qquad (x\in \R^n, t>0)\end{equation}
If we assume now that $\re \lambda>0$ and $\phi=\Pc_\lambda f$, then we have
\begin{equation} \label{ext2a}\lim_{t\to 0} \psi(x,t)= f(x)\, .\end{equation}
The general theory then tells us that there is a unique solution $\psi$ of the extension problem \eqref{ext1a} and \eqref{ext2a} considered by Caffarelli and Silvestre  in \cite{CS}.

\section*{Acknowledgement}
The third author (L.~R.) is supported by Ikerbasque, by the Basque Government through the BERC 2022-2025 program, and by Agencia Estatal de Investigaci\'on through BCAM Severo Ochoa excellence accreditation CEX2021-001142-S/MCIN/AEI/10.13039/501100011033, RYC2018-025477-I, CNS2023-143893, and PID2023-146646NB-I00 funded by MICIU/AEI/10.13039/501100011033 and by ESF+.

\end{document}